\documentclass[11pt]{article}

\usepackage[margin=1in]{geometry}
\usepackage{amsmath,amssymb,amsthm,mathtools}
\usepackage{enumitem}
\usepackage{hyperref}
\usepackage[nameinlink,capitalize]{cleveref}

\newtheorem{theorem}{Theorem}[section]
\newtheorem{lemma}[theorem]{Lemma}
\newtheorem{corollary}[theorem]{Corollary}
\newtheorem{proposition}[theorem]{Proposition}
\theoremstyle{remark}
\newtheorem{remark}[theorem]{Remark}
\newtheorem{definition}[theorem]{Definition}
\theoremstyle{plain}

\usepackage[textsize=footnotesize]{todonotes}

\newcommand{\F}{\mathbb{F}}
\newcommand{\Fp}{\mathbb{F}_\ell}
\newcommand{\Fps}{\mathbb{F}_\ell^{\!*}}
\newcommand{\Oe}{\tilde{O}}
\newcommand{\Z}{\mathbb{Z}}

\title{\huge\bfseries Multi-target hyperbolic sieves and elliptic trace obstructions}
\author{\Large\bfseries  Pantelimon St\u{a}nic\u{a}$^1$, Erik Mulder$^2$, Markus Hittmeir$^3$ 
	\vspace{.2cm}\\
	$^1$Applied Mathematics Department, Naval Postgraduate School\\
	Monterey, CA 93943, USA; \texttt{pstanica@nps.edu}\\
	$^2$Department of Informatics, University of Bergen\\
	Bergen, Norway; \texttt{erik.mulder@uib.no}\\
	$^3$NORCE Analytics, NORCE Research AS, Bergen, Norway;
	\texttt{mahi@norceresearch.no}}
\date{}

\begin{document}
	\maketitle
	

	\begin{abstract}
		Let $N=pq$ be a semiprime and let $\ell\nmid Na$ be an odd prime.  The hyperbolic sieve set
		\[
		H_a(N;\ell)=\{ax+Nx^{-1}:x\in\mathbb F_\ell^*\}
		\]
		contains the residue of the linear form $ap+q$ modulo $\ell$ and has exact
		cardinality $(\ell+\chi(aN))/2$, where $\chi$ is the Legendre symbol modulo
		$\ell$.  We study simultaneous sieving for several linear forms and give a complete local analysis of the two-target primitive-root
		case proposed in connection with deterministic integer factorization.
		For two distinct coefficients $a,b$, with $A=4aN$ and $B=4bN$, we prove an
		exact formula for $|H_a(N;\ell)\cup H_b(N;\ell)|$ in terms of the degree-four
		character sum
		\[
		K(A,B)=\sum_{z\in\mathbb F_\ell}\chi((z^2-A)(z^2-B)).
		\]
		For a smooth projective genus-one curve $E/\mathbb{F}_\ell$, we write
		$t_E=\ell+1-\#E(\mathbb{F}_\ell)$ for its Frobenius trace.  With this convention,
		$K(A,B)$ is the Frobenius trace, up to sign and an additive constant, of the
		genus-one curve $Y^2=(X^2-A)(X^2-B)$.  Hence Hasse--Weil gives a uniform
		$O(\sqrt\ell)$ error from the main term $3\ell/4$, and negative traces
		explain the counterexamples to the pointwise bound $3\ell/4+1$.
		The main structural result is sharper than a second-moment estimate.  For a
		fixed nonsquare ratio $c\in\mathbb F_\ell^*$, the family
		$Y^2=(X^2-A)(X^2-cA)$ consists only of one elliptic curve and its quadratic
		twist, i.e. the nonsquare twist whose Frobenius trace has the opposite sign.
		If $t_c$ is the Frobenius trace of $Y^2=(X^2-1)(X^2-c)$, then
		$K(A,cA)=-1-\chi(A)t_c$.
		Thus $K(A,cA)$, and in particular the primitive-root two-target union size,
		has an exact two-valued distribution as $N$ varies.  The proposed local bound
		is therefore equivalent to a short interval condition on one Frobenius trace.
		We also prove a multi-target estimate
		\[
		\left|\left|\bigcup_{j=1}^k H_{a_j}(N;\ell)\right|
		-\ell(1-2^{-k})\right|
		\le (k-1+2^{-k})\sqrt\ell+k
		\]
		for distinct coefficients $a_1,\ldots,a_k$, together with the corresponding CRT product bound.  Finally, for special-shape inputs $N=u^rv$, we study the $r$-power-constrained image $H_{a,r}(N;\ell)$ and determine its exact size by an elementary involution argument.  These results recast the proposed local sieve questions as explicit finite-field statements with verified local tests.
	\end{abstract}
	
	\noindent
	The second and the third author were supported by the Research Council of Norway (grant 357539).
	
	\noindent\textbf{Mathematics Subject Classification 2020:}
	11Y05, 11G20, 11T24, 11A51.
	
	\noindent\textbf{Keywords:}
	integer factorization, hyperbolic sieve, elliptic curve, Frobenius trace,
	quadratic twist, Hasse--Weil bound, character sum, CRT product, primitive root.

	\section{Introduction}
	\label{sec:intro}
	
	Throughout the paper, $\ell>2$ denotes an odd prime, the field $\Fp$ is
	identified with $\mathbb Z/\ell\mathbb Z$, and $\Fps=\Fp\setminus\{0\}$ is
	its multiplicative group.  The symbol $\chi=\chi_\ell$ denotes the Legendre
	symbol modulo $\ell$, extended by $\chi(0)=0$.  Thus $\chi(a)=1$ if
	$a\in\Fps$ is a square and $\chi(a)=-1$ otherwise.  Unless explicitly stated
	otherwise, $N$ denotes an integer whose image modulo $\ell$ lies in $\Fps$.
	We write $\Oe(\cdot)$ for an asymptotic bound suppressing polylogarithmic
	factors; concretely, $f=\Oe(g)$ means $f=O(g\log^c g)$ for some constant
	$c$.  A primitive root modulo $\ell$ is a generator of the cyclic group
	$\Fps$.
	
	\subsection{The hyperbolic sieve in deterministic integer factorization}
	
	Integer factorization is a central problem in algorithmic number theory and underlies 
	the security of RSA-type cryptosystems.  While sub-exponential algorithms 
	such as the Number Field Sieve dominate in practice, the best algorithms with 
	rigorous deterministic runtime bounds remain far slower. The currently fastest 
	result of this type establishes a bound of shape $\Oe(N^{1/5})$ and is due to Harvey and Hittmeir~\cite{HarveyHittmeir2022}. It is the most recent in a series (\cite{Harvey2021}, \cite{Hittmeir2018}, \cite{Hittmeir2021}) of papers that improved the earlier 
	$\Oe(N^{1/4})$ bound, which had remained the benchmark since the 1970s.
	
	One of the tools enabling these improvements is the \emph{hyperbolic sieve},
	introduced in~\cite{Hittmeir2018} and extended in~\cite{Hittmeir2023}.
	Let $N = pq$ be a semiprime (a product of two primes) and $\ell$ an odd prime
	not dividing $N$.  The pair $(p \bmod \ell,\, q \bmod \ell)$ lies on the
	\emph{modular hyperbola} $\mathcal{H}(N;\ell) := \{(x,y)\in\Fps\times\Fps : xy
	\equiv N \pmod\ell\}$.  For a nonzero coefficient $a\in\Fps$, the image of
	this hyperbola under the linear form $(x,y)\mapsto ax + y$ is
	\begin{equation}\label{eq:Ha-def}
		H_a(N;\ell) := \{ ax + Nx^{-1} : x \in \Fps \} \subseteq \Fp.
	\end{equation}
	Since $ap + q \equiv a(p \bmod\ell) + (q \bmod\ell) \pmod\ell$ and
	$(p\bmod\ell,\, q\bmod\ell)\in\mathcal{H}(N;\ell)$, any residue of $ap+q$
	modulo $\ell$ must lie in $H_a(N;\ell)$.  As shown in
	Section~\ref{sec:one-target}, $|H_a(N;\ell)| = (\ell \pm 1)/2$, so the set
	eliminates approximately half of all candidate residues modulo $\ell$.
	
	The \emph{product construction} with the Chinese Remainder Theorem (CRT)
	amplifies this.  Let $m = \ell_1\cdots\ell_n$ be a product of $n$ distinct 
	odd primes not dividing~$N$.  The set of residues modulo $m$ whose projection 
	modulo each $\ell_i$ lies in $H_a(N;\ell_i)$ is identified, via the CRT, with the
	Cartesian product $\prod_{i=1}^n H_a(N;\ell_i)$.  This product has size
	$\prod_{i=1}^n (\ell_i \pm 1)/2 = \Oe(m/2^n)$, an exponentially small
	fraction of $m$.  The residue of $ap+q$ modulo $m$ lies in this product
	set, so the CRT product serves as a compressed sieve for the linear form.
	This compressed sieve drives the \emph{baby-step--giant-step} collision
	search at the heart of the deterministic factorization algorithm in 
	\cite{Hittmeir2018}. In that technique, one seeks to find the sum $p+q$ of the prime factors of $N$ by looking for a collision between two sets: a set of ``giant steps'' and a set of ``baby steps'' derived from the candidates for the residue of $p+q\pmod m$ in the CRT sieve set. The size of this set controls the number of  baby steps, and thus the overall runtime.
	
	The same viewpoint may also be relevant for the exponent-one-fifth technique of Harvey~\cite{Harvey2021} and Harvey--Hittmeir~\cite{HarveyHittmeir2022}.
	Those algorithms organize a global baby-step--giant-step search over linear combinations that arise from the unknown factors of~$N$ in Lehman's factorization method~\cite{Lehman1974}. A hyperbolic sieve could, in principle, be used there as a residue prefilter: for a suitable CRT modulus~$m$, one would retain only those residue classes of
	the relevant linear combinations that are compatible with the modular hyperbola $xy\equiv N\pmod m$.  The scaling description in Section~\ref{sec:scale} is useful in this setting because it shows that many coefficient choices can be covered by scaled copies of a small number of basis sieve sets.  Thus one may hope to amortize the sieve-set construction across the
	family of linear forms appearing in the exponent-one-fifth search, rather than building a separate CRT filter from scratch for each form.  We do not claim here that this yields an improved asymptotic bound: such an application would require
	a careful compatibility analysis with the global collision search, together with a proof that the residue savings dominate the cost of the additional scaling.

	\subsection{The two-target problem and the elliptic obstruction}
	
	A \emph{primitive root} modulo $\ell$ is a generator $\gamma$ of the cyclic
	group $\Fps$; equivalently, $\gamma$ has multiplicative order $\ell-1$.
	The number of primitive roots modulo $\ell$ is $\phi(\ell-1)$, where $\phi$
	is Euler's totient function.
	
	The purpose of this paper is to study whether two linear forms $ap + q$ and
	$bp + q$ can be sieved \emph{simultaneously} at a single prime $\ell$, using
	a combined candidate set still substantially smaller than~$\ell$.  The natural
	two-target set is
	\[
	U_{a,b}(N;\ell) := H_a(N;\ell) \cup H_b(N;\ell).
	\]
	Since each of $H_a$ and $H_b$ has size $\approx\ell/2$, their union has size
	at most $\ell$; if the two sets overlap substantially, the union is closer to
	$\ell/2$ than to $\ell$.  Motivated by the factorization-to-subset-sum framework of~\cite{Hittmeir2023} and the results in Section \ref{sec:scale},
	one is led to considering the two-target sieve set for the specific choice
	$a=\gamma$ and $b=\gamma^2$, where $\gamma$ is a primitive root modulo
	$\ell$. We will prove that $|U_{\gamma,\gamma^2}(N;\ell)|$ is always close to $3\ell/4$. In fact, we often (but not always) have
	\begin{equation}\label{eq:conjecture}
		|U_{\gamma,\gamma^2}(N;\ell)| \le \frac{3\ell}{4} + 1.
	\end{equation}
	
	This pointwise benchmark would give a CRT product of size $\Oe(m\cdot(3/4)^n)$ covering
	$2^n$ linear forms simultaneously, improving the information density of the
	sieve.  The later arguments do not require the benchmark to hold at every
	prime: for the deterministic factorization application it is enough to have a
	uniform local bound $c\ell$ for some $c<1$, or to verify and multiply the actual local factors
	at the selected primes. In this paper, we provide both a worst-case uniform local bound (Corollary~\textup{\ref{thm:corrected-bound}}) and study the explicit conditions under which the more favorable bound (\ref{eq:conjecture}) holds (Corollary~\textup{\ref{cor:exact-trace-criterion}}).
	
	We show that the size of $U_{a,b}$ is governed by the character sum
	\begin{equation}\label{eq:K-def}
		K(A,B) := \sum_{z \in \Fp} \chi\bigl((z^2 - A)(z^2 - B)\bigr),
	\end{equation}
	where $A = 4aN$, $B = 4bN$, and $\chi = \chi_\ell$ is the Legendre symbol
	modulo $\ell$ (with $\chi(0)=0$).  We call $K(A,B)$ a \emph{degree-four
		Legendre-symbol sum} because the argument of $\chi$ is a degree-four
	polynomial in $z$.  For a smooth projective genus-one curve $E/\Fp$, its Frobenius trace is
	$t_E=\ell+1-\#E(\Fp)$.  With this convention, the sum $K(A,B)$ is the
	Frobenius trace, up to a sign and an additive constant, of the genus-one curve
	$Y^2 = (X^2-A)(X^2-B)$.  Consequently Hasse--Weil gives
	$|K(A,B)+1|\le 2\sqrt\ell$, which is the correct $O(\sqrt\ell)$ scale
	for the error in $|U_{a,b}|$.  Negative values of $K(A,B)$ can make
	$|U_{a,b}|$ exceed $3\ell/4+1$, so the obstruction to the proposed pointwise
	bound (\ref{eq:conjecture}) is arithmetic.  Averaging gives
	$\frac{1}{\ell-1}\sum_{N\in\Fps}K(4\gamma N,4\gamma^2N)=-1$, and for
	fixed nonsquare ratio $c$ the second moment is
	$\frac{1}{\ell-1}\sum_{A\in\Fps}K(A,cA)^2=t_c^2+1$, where $t_c$ is the
	Frobenius trace of $Y^2=(X^2-1)(X^2-c)$.
	
	\subsection{Main results}
	
	For clarity, we also introduce the ratio notation used in the trace formulas.
	For a nonsquare $c\in\Fps$ and $A\in\Fps$, write
	$K_c(A)=K(A,cA)=\sum_{z\in\Fp}\chi((z^2-A)(z^2-cA))$.  For the
	primitive-root pair $(\gamma,\gamma^2)$, the ratio is $c=\gamma$ and the
	relevant parameter is $A=4\gamma N$.
	
	The correction term appearing in the primitive-root formula is
	\[
	E(A,B) := r_B\,\chi(B-A) + r_A\,\chi(A-B),
	\qquad r_A := 1 + \chi(A),\quad r_B := 1+\chi(B).
	\]
	Note $r_A, r_B \in \{0, 2\}$, so $E(A,B)\in\{-2,0,2\}$; when
	$a = \gamma$ is a nonsquare and $b = \gamma^2$ is a square, exactly one of
	$r_A, r_B$ is $2$, forcing $E(A,B)\in\{-2,2\}$.
	
	Our first result is an exact inclusion--exclusion formula for two targets.
	For any distinct $a,b\in\Fps$, with $A=4aN$ and $B=4bN$, we prove
	\[
	4\,|\Fp\setminus U_{a,b}| = \ell + 2 - r_A - r_B + K(A,B)
	+ r_B\chi(B-A) + r_A\chi(A-B).
	\]
	For the primitive-root specialization $a=\gamma$, $b=\gamma^2$, this becomes
	\[
	|U_{\gamma,\gamma^2}| = \frac{3\ell - K(A,B) - E(A,B)}{4}.
	\]
	Together with Hasse--Weil, it gives
	\[
	\left||U_{a,b}(N;\ell)| - \frac{3\ell}{4}\right|
	\le \frac{2\sqrt\ell + 7}{4}
	\]
	for distinct $a,b$.
	
	The primitive-root pair has a sharper description.  In the sense just defined, if $t_\gamma$ denotes the Frobenius trace of
	\[
	Y^2=(X^2-1)(X^2-\gamma),
	\]
	then $|U_{\gamma,\gamma^2}(N;\ell)|$ assumes exactly two values as $N$ varies, according to the square class of $A=4\gamma N$.  Consequently, the natural bound
	\[
	|U_{\gamma,\gamma^2}(N;\ell)|\le \frac{3\ell}{4}+1
	\quad\text{for all }N\in\Fps
	\]
	is equivalent to the trace interval
	\[
	-3+2\chi(\gamma-1)
	\le t_\gamma\le
	3+2\chi(1-\gamma).
	\]
	This criterion explains the counterexamples at $(\ell,N,\gamma)=(5,1,2)$ and
	$(13,1,7)$ and turns the original pointwise problem into a trace-selection
	problem among primitive roots.
	
	The same trace distribution gives the first and second moments.  For fixed
	nonsquare $c\in\Fps$,
	\[
	K(A,cA)=-1-\chi(A)t_c,
	\]
	where $t_c$ is the Frobenius trace of $Y^2=(X^2-1)(X^2-c)$.  Hence
	\[
	\frac{1}{\ell-1}\sum_{A\in\Fps}K(A,cA)^2=t_c^2+1,
	\]
	with variance $t_c^2\le4\ell$.  The dependence on $t_c$ is essential; there is
	no universal second moment independent of $c$ and $\ell$.
	
	For $k$ distinct coefficients $a_1,\ldots,a_k\in\Fps$, the same finite-field
	method gives the explicit multi-target estimate
	\[
	\left|\,\Bigl|\bigcup_{j=1}^k H_{a_j}(N;\ell)\Bigr|
	- \ell\Bigl(1-2^{-k}\Bigr)\right|
	\le (k-1+2^{-k})\sqrt\ell+k.
	\]
	Over $n$ primes $\ell_i \ge L$, the CRT product has size at most
	\[
	m\cdot\bigl(1 - 2^{-k} + (k-1+2^{-k})L^{-1/2}+kL^{-1}\bigr)^n
	\]
	and simultaneously covers all $k^n$ CRT-compatible local coefficient choices.
	
	Finally, for special-shape inputs $N=u^rv$ and primes $\ell\equiv1\pmod r$,
	the $r$-power-constrained local hyperbola $P_r(N;\ell)=\{(x,y) \in \Fps\times\Fps : xy \equiv N \pmod\ell,\;  x \in (\Fps)^r\}$
	has size exactly
	$(\ell-1)/r$, giving an exact CRT saving by $r^n$.  
	We also determine the
	exact size of the restricted linear image
	\[
	H_{a,r}(N;\ell)=\{ax+Nx^{-1}:x\in(\Fps)^r\}
	\]
	by an elementary involution argument.  This complements the character-sum
	analysis with an exact group-theoretic reduction.
	
	\subsection{Structure of the paper}
	
	Section~\ref{sec:prelim} records the finite-field and elliptic-curve estimates used throughout.
	Section~\ref{sec:one-target} analyses the one-target sieve.
	Section~\ref{sec:scale} proves a scaling representation for general sieve sets in terms of primitive-root base sets and derives coverings by the two-target unions.
	Section~\ref{sec:two-target} proves the exact union formula.
	Section~\ref{sec:results} derives the two-target corollaries, the trace
	criterion for the primitive-root pair, and the counterexamples.
	Section~\ref{sec:second-moment} proves the first- and second-moment formulas
	and the concentration bound.
	Section~\ref{sec:k-target} proves the explicit $k$-target bound and its CRT
	consequence.
	Section~\ref{sec:r-power} develops the $r$-power sieve.
	Section~\ref{sec:algorithmic} discusses exact local verification.
	Section~\ref{sec:paths-forward} records several routes for using the trace and
	multi-target results in the sieve program.  Section~\ref{sec:conclusion}
	concludes with open questions.  The appendix contains a short verification
	script for the numerical counterexamples.

	\section{Preliminaries}
	\label{sec:prelim}
	
	We collect the elementary finite-field identities and the standard curve bounds
	used in the sequel.  This keeps the later proofs self-contained: each appeal to
	``Hasse--Weil'' or to a quadratic character estimate below refers to one of the
	statements in this section.
	
	We also fix the geometric terminology used later.  If $C$ is an affine curve
	over $\Fp$, its \emph{smooth projective model} is the nonsingular projective
	curve over $\Fp$ birational to $C$.  A smooth projective genus-one curve with
	an $\Fp$-rational point is an elliptic curve over $\Fp$.  For any smooth
	projective genus-one curve $E/\Fp$, its \emph{Frobenius trace} is
	\[
	t_E:=\ell+1-\#E(\Fp).
	\]
	Thus the Hasse--Weil bound in genus one is the estimate $|t_E|\le2\sqrt\ell$.
	For a hyperelliptic equation $Y^2=f(X)$ with $f$ squarefree, the associated
	quadratic twist by $d\in\Fps$ is represented, in this affine model, by
	$dY^2=f(X)$, equivalently by $Y^2=d f(X)$ after replacing $Y$ over a quadratic
	extension.  We shall use only the standard fact that a nonsquare quadratic
	twist changes the sign of the Frobenius trace.
	
	\begin{lemma}\label{lem:quadratic-shift-sum}
		For every $A\in\Fps$,
		\[
		\sum_{z\in\Fp}\chi(z^2-A)=-1.
		\]
		More generally, if $f(X)=uX^2+vX+w\in\Fp[X]$ has $u\ne0$ and nonzero
		discriminant, then $\sum_{x\in\Fp}\chi(f(x))=-\chi(u)$.
	\end{lemma}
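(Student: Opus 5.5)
The plan is to prove the general statement first and then read off the special case $f(z)=z^2-A$, which has $u=1$ and discriminant $4A\neq 0$.

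\textbf{Reduction to a counting problem.} Since $\ell$ is odd, we can complete the square. Write $f(x)=u\bigl((x+v/(2u))^2-D/(4u^2)\bigr)$ with $D=v^2-4uw\neq0$. Multiplicativity of $\chi$ and the translation $x\mapsto x-v/(2u)$, which is a bijection of $\Fp$, give
\[
\sum_{x}\chi(f(x))=\chi(u)\sum_{z\in\Fp}\chi(z^2-c),\qquad c=D/(4u^2)\in\Fps .
\]
So it suffices to show that $S(c):=\sum_z\chi(z^2-c)=-1$ for every $c\in\Fps$.

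\textbf{Evaluating $S(c)$.} Let $n(t)$ be the number of $z\in\Fp$ with $z^2=t$. Then $n(t)=1+\chi(t)$ for all $t$, including $t=0$. Counting affine points on the conic $y^2=z^2-c$ gives
\[
\#\{(z,y):y^2=z^2-c\}=\sum_z n(z^2-c)=\ell+S(c).
\]
We count the same set directly. Factor the equation as $(z-y)(z+y)=c$ and change variables linearly by $s=z-y$, $t=z+y$, which is invertible because $2$ is a unit. The equation becomes $st=c$ with $c\neq0$, so $s\in\Fps$ is free and $t=c/s$ is determined. This gives exactly $\ell-1$ solutions. Hence $\ell+S(c)=\ell-1$, so $S(c)=-1$, and therefore $\sum\chi(f)=-\chi(u)$.

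\textbf{Main point.} The only real step is the count of points on $st=c$. The thing to verify there is that the change of variables is a bijection, which uses that $\ell$ is odd, and that $c\neq0$, which is exactly where the nonzero-discriminant hypothesis enters. If $c=0$, the sum would instead be $(\ell-1)\chi(u)$. An alternative route I could fall back on is the classical Jacobsthal-type substitution $z^2-c=z^2(1-c z^{-2})$ for $z\neq0$. I prefer the conic parametrisation because it avoids separating out the case $z=0$.
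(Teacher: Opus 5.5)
Your proof is correct and follows essentially the same route as the paper: count the affine points of $y^2=z^2-c$ via $(z-y)(z+y)=c$ to get $\ell-1$, compare this with $\ell+\sum_z\chi(z^2-c)$, and obtain the general quadratic case by completing the square. You are somewhat more explicit than the paper on two points it leaves implicit: that $(z,y)\mapsto(z-y,z+y)$ is a bijection because $\ell$ is odd, and that the nonzero discriminant is exactly what makes $c\neq0$.
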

	
	\begin{proof}
		For the first identity, count the points on $Y^2=z^2-A$.  The equation is
		$(z-Y)(z+Y)=A$, and every choice of a nonzero value for $z-Y$ uniquely
		determines $z+Y$; hence it has exactly $\ell-1$ affine solutions.  On the
		other hand, the number of such solutions is
		$\sum_{z\in\Fp}(1+\chi(z^2-A))=\ell+\sum_z\chi(z^2-A)$, giving the claim.
		The general quadratic formula follows by completing the square and reducing to
		the first identity after the change of variables allowed by the nonzero leading
		coefficient and nonzero discriminant.  It is also the standard quadratic
		character-sum identity; see, for example, \cite[Ch.~5]{LidlNiederreiter1997}.
	\end{proof}
	
	The below result is the Riemann hypothesis for curves over finite fields applied to the
	quadratic cover $Y^2=f(X)$; equivalently, it is the usual Weil bound for
	multiplicative character sums.  We use it only in this squarefree quadratic
	character form.  A standard reference is \cite[Thm.~5.41]{LidlNiederreiter1997}.
	\begin{theorem}\label{thm:weil-character-sum}
		Let $f\in\Fp[X]$ be of positive degree $d$ and not a square in
		$\Fp[X]$.  Then
		\[
		\left|\sum_{x\in\Fp}\chi(f(x))\right|\le (d-1)\sqrt\ell.
		\]
	\end{theorem}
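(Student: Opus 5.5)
The plan is to derive the bound from the Riemann hypothesis for the smooth projective curve attached to $Y^2=f(X)$. The statement is the standard Weil bound, so the proof is a reduction to that theorem. First I will normalize $f$. Write $f=c\,g^2h$ with $c\in\Fps$ and $h$ monic and squarefree over $\Fp$. Since $f$ is not a square in $\Fp[X]$, either $\deg h\ge1$, or $h=1$ and $c$ is a nonsquare. In the second case $f$ is a constant times a square, and the sum is handled directly: $\chi(f(x))=\chi(c)=-1$ off the roots of $g$, so $|\sum|\le\ell$. This case actually exceeds $(d-1)\sqrt\ell$ for small $d$, so I will read the hypothesis as requiring that $f$ not be a constant multiple of a square. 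That is the Lidl--Niederreiter formulation, and it is the only way the theorem is used later, for squarefree $f$.

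In the main case, $\chi(f(x))=\chi(c)\chi(h(x))$ whenever $g(x)\neq0$. The at most $\deg g$ roots of $g$ change the sum by at most $\deg g$. It therefore suffices to bound $S=\sum_x\chi(h(x))$ by $(\deg h-1)\sqrt\ell$ and to absorb the correction. When $\deg g\ge1$, the inequality $(\deg h-1)\sqrt\ell+\deg g\le(d-1)\sqrt\ell$ holds because $d-1=\deg h+2\deg g-1$ and $2\deg g\sqrt\ell\ge\deg g$.

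For squarefree $h$ of degree $e\ge1$, let $C$ be the smooth projective model of $Y^2=h(X)$. It has genus $g_C=\lfloor(e-1)/2\rfloor$. The affine point count is $\sum_x(1+\chi(h(x)))=\ell+S$. The points at infinity number $1$ if $e$ is odd, and $1+\chi(\mathrm{lc}(h))$ if $e$ is even. The Riemann hypothesis (Weil) gives $|\#C(\Fp)-\ell-1|\le 2g_C\sqrt\ell$. When $e$ is odd this yields $|S|\le(e-1)\sqrt\ell$. When $e$ is even it yields $|S+\chi(\mathrm{lc}(h))|\le(e-2)\sqrt\ell$, hence $|S|\le(e-2)\sqrt\ell+1\le(e-1)\sqrt\ell$.

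The main obstacle is the genuine input, the Riemann hypothesis for curves, which I will not reprove and will cite as \cite[Thm.~5.41]{LidlNiederreiter1997}. The remaining care is in the degenerate case and in the bookkeeping at infinity for even degree.
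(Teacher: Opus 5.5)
Your proposal is correct and is the same reduction the paper indicates: the paper only cites Lidl--Niederreiter, Thm.~5.41, describing it as the Riemann hypothesis for the curve $Y^2=f(X)$, and you carry out that reduction with the square part $g^2$ and the points at infinity handled correctly. Your reading of the hypothesis as ``$f$ is not a constant multiple of a square'' is necessary, since for $f=cX^2$ with $c$ a nonsquare the sum is $-(\ell-1)$ and the statement as written fails (this is harmless here, because the paper applies it only to squarefree $f$); one small fix is to cite Weil's theorem for the Riemann hypothesis input rather than Thm.~5.41, which is itself the bound being proved.
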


	We will also need the Hasse--Weil bound in genus one; see~\cite[Ch.~V, Thm.~1.1]
	{Silverman2009} or Weil's original work~\cite{Weil1948}.
	\begin{theorem}\label{thm:hasse-weil-genus-one}
		Let $E$ be a smooth projective genus-one curve over $\Fp$.  If
		$t_E=\ell+1-\#E(\Fp)$, then
		\[
		|t_E|\le 2\sqrt\ell .
		\]
	\end{theorem}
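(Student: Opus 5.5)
The statement is the Hasse--Weil bound $|t_E|\le 2\sqrt\ell$ for a smooth projective genus-one curve $E/\Fp$. I would not reprove it from scratch. Instead I would reduce it to the standard elliptic-curve statement \cite[Ch.~V, Thm.~1.1]{Silverman2009}.

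\textbf{Step 1: find a rational point.} A genus-one curve over a finite field always has an $\Fp$-rational point. This follows from Lang's theorem: the Jacobian is connected, so every principal homogeneous space over a finite field is trivial. Alternatively, F.~K.~Schmidt's theorem gives a divisor of degree one, and Riemann--Roch then turns it into an effective divisor of degree one, i.e.\ a rational point. Choosing such a point $O$ makes $(E,O)$ an elliptic curve over $\Fp$.

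\textbf{Step 2: apply Silverman's argument.} Let $\phi$ be the $\ell$-power Frobenius endomorphism. Then $E(\Fp)=\ker(1-\phi)$, and $1-\phi$ is separable, so
\[
\#E(\Fp)=\deg(1-\phi).
\]
The degree map is a positive-definite quadratic form on $\mathrm{End}(E)$. Its associated bilinear form gives
\[
\deg(1-\phi)=1-t_E+\deg\phi=1-t_E+\ell .
\]
Positive-definiteness of $\deg(m-n\phi)=m^2-mn\,t_E+n^2\ell$ for all integers $m,n$ means this binary form is nonnegative. Hence its discriminant satisfies $t_E^2-4\ell\le 0$, which is exactly the claim.

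\textbf{Main obstacle.} The only step beyond the textbook theorem is Step 1, removing the assumption that $E$ has a rational point. I would handle it by citing Lang's theorem. If an elementary route is preferred, one can use the Weil conjectures (rationality of the zeta function) for arbitrary smooth projective curves of genus $g$, which give $|\#E(\Fp)-\ell-1|\le 2g\sqrt\ell$ directly without needing a base point. For $g=1$ this is precisely the statement, and the same bound also yields $\#E(\Fp)\ge \ell+1-2\sqrt\ell>0$, which recovers Step 1 a posteriori.
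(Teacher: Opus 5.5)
Your proposal is correct and follows the paper's route: the paper gives no argument beyond citing Silverman's Theorem V.1.1 (and Weil), your Step~2 is exactly Silverman's discriminant argument, and your Step~1 (a rational point via Lang's theorem, or via F.~K.~Schmidt plus Riemann--Roch) makes explicit the reduction from a genus-one curve to an elliptic curve that the citation leaves implicit. One correction to your closing aside: the bound $|\#E(\Fp)-\ell-1|\le 2g\sqrt\ell$ for general curves comes from the Riemann hypothesis part of Weil's theorem, not from rationality of the zeta function, so that alternative is deeper rather than more elementary.
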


	\begin{proposition}\label{prop:quartic-trace}
		Let $f\in\Fp[X]$ be squarefree of degree $4$ with square leading coefficient,
		and let $E$ be the smooth projective model of the affine curve $Y^2=f(X)$.
		Then $E$ is a smooth projective genus-one curve with two $\Fp$-rational points
		at infinity, and
		\[
		\#E(\Fp)=\ell+2+\sum_{x\in\Fp}\chi(f(x)).
		\]
		Consequently its Frobenius trace is
		$t_E=-1-\sum_{x\in\Fp}\chi(f(x))$.
	\end{proposition}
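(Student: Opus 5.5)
The plan is to count points directly on an explicit projective model of $Y^2=f(X)$ and then identify that model as smooth of genus one. Write $f(X)=u^2\prod_{i=1}^4(X-\alpha_i)$ with $u\in\Fps$ and the $\alpha_i$ distinct in $\overline{\F}_\ell$. The affine curve $C:Y^2=f(X)$ is smooth, since a singular point would need $Y=0$, $f(x)=0$ and $f'(x)=0$, contradicting squarefreeness. We count its affine $\Fp$-points fibrewise. For each $x\in\Fp$, the equation $Y^2=f(x)$ has exactly $1+\chi(f(x))$ solutions, including the case $f(x)=0$ because $\chi(0)=0$. Summing over $x$ gives $\ell+\sum_x\chi(f(x))$ affine points.

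To handle infinity, we cover the curve by two affine charts glued in the standard hyperelliptic way. The second chart uses $X=1/T$ and $Y=W/T^2$, which gives $W^2=T^4f(1/T)=:g(T)$. Here $g(T)=u^2\prod(1-\alpha_iT)$ has $g(0)=u^2\neq0$, and $g$ is squarefree of degree $4$ or $3$ (degree $3$ exactly when some $\alpha_i=0$). So this chart is also smooth, and the glued curve $E$ is a smooth projective model birational to $C$. The points of $E$ not in the first chart are those with $T=0$, namely $W^2=u^2$, i.e.\ $W=\pm u$. These are two distinct $\Fp$-rational points because $u\neq0$ and $\ell$ is odd. Hence $\#E(\Fp)=\ell+2+\sum_x\chi(f(x))$.

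For the genus, apply Riemann--Hurwitz to the double cover $E\to\mathbb P^1$ given by $(X,Y)\mapsto X$. Over $\overline{\F}_\ell$ the branch points are the four roots $\alpha_i$, and $\infty$ is unramified because the fibre there consists of the two distinct points $W=\pm u$. Since $\ell$ is odd the cover is tame, so $2g-2=2(-2)+4$, which gives $g=1$. Alternatively one can note that a squarefree quartic model with a rational point is the standard genus-one model. By uniqueness of the smooth projective model, $E$ is \emph{the} smooth projective model referred to in the statement. The trace formula then follows directly from the definition: $t_E=\ell+1-\#E(\Fp)=-1-\sum_x\chi(f(x))$.

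The only delicate step is the chart at infinity. We must check that the two-chart gluing really is smooth and projective (it is the normalization of the projective closure, which is singular at infinity in $\mathbb P^2$). We must also check that the count of exactly two rational points uses the square leading coefficient. With a nonsquare leading coefficient those two points would instead be conjugate over $\F_{\ell^2}$, and there would be no rational points at infinity. This is the step I would write out carefully; the rest is routine.
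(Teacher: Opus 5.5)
Your proposal is correct and follows the same route as the paper. Both use smoothness of the affine model from squarefreeness, the fibrewise count $1+\chi(f(x))$ giving $\ell+\sum_x\chi(f(x))$ affine points, two rational points at infinity because the leading coefficient is a square, and $t_E=\ell+1-\#E(\Fp)$ by definition. The paper simply cites the standard completion of a squarefree hyperelliptic curve of degree $2g+2$ for the genus and the two points at infinity. You make this explicit through the chart $W^2=T^4f(1/T)$ and Riemann--Hurwitz, which is a faithful expansion of the same argument.
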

	
	\begin{proof}
		The polynomial is squarefree, so the affine model is nonsingular.  The standard
		completion of a squarefree hyperelliptic curve $Y^2=f(X)$ of degree $2g+2$
		has genus $g$ and two points at infinity; here $2g+2=4$, so $g=1$.  Because
		the leading coefficient of $f$ is a square in $\Fp$, both points at infinity
		are $\Fp$-rational.  For each $x\in\Fp$, the equation $Y^2=f(x)$ has
		$1+\chi(f(x))$ solutions in $Y$, including the case $f(x)=0$.  Summing over
		$x$ gives $\ell+\sum_x\chi(f(x))$ affine points, and adding the two points at
		infinity gives the displayed point count.  The formula for $t_E$ is the
		definition $t_E=\ell+1-\#E(\Fp)$.
	\end{proof}
	
	\begin{lemma}\label{lem:quadratic-twist-trace}
		Let $E$ be an elliptic curve over $\Fp$ with Frobenius trace $t_E$, and let
		$E^{(d)}$ be its quadratic twist by $d\in\Fps$.  Then the Frobenius trace of
		$E^{(d)}$ is $\chi(d)t_E$.
	\end{lemma}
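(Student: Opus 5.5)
The plan is to compare the Frobenius actions on the Tate modules (equivalently, on the point counts) of $E$ and $E^{(d)}$, splitting according to whether $d$ is a square in $\Fp$. Fix a short Weierstrass model $E: Y^2=f(X)$ with $f$ a squarefree cubic; this is available since $\ell$ is odd. The twist is then $E^{(d)}: dY^2=f(X)$, in accordance with the convention of Section~\ref{sec:prelim}.

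\textbf{Case $\chi(d)=1$.} Write $d=s^2$ with $s\in\Fps$. The substitution $Y\mapsto sY$ is an isomorphism $E^{(d)}\cong E$ defined over $\Fp$. Isomorphic curves have the same number of rational points, so $t_{E^{(d)}}=t_E=\chi(d)t_E$.

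\textbf{Case $\chi(d)=-1$.} We count points directly. Both curves have exactly one point at infinity. For each $x\in\Fp$:
\begin{itemize}
\item $E$ has $1+\chi(f(x))$ affine points above $x$;
\item $E^{(d)}$ has $1+\chi(d^{-1}f(x))=1-\chi(f(x))$ affine points above $x$. This count is also correct when $f(x)=0$, since then both counts equal $1$.
\end{itemize}
Summing over $x$ gives
\[
\#E(\Fp)=\ell+1+S,\qquad \#E^{(d)}(\Fp)=\ell+1-S,\qquad S=\sum_{x\in\Fp}\chi(f(x)).
\]
Hence $t_E=-S$ and $t_{E^{(d)}}=S=-t_E=\chi(d)t_E$.

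\textbf{Quartic models.} The paper uses this lemma for quartic models $Y^2=f(X)$, so I would add a remark that the same count works directly there. Let $f$ have degree $4$ and square leading coefficient $u$. The twist $dY^2=f(X)$ has leading coefficient $d^{-1}u$, which is a nonsquare. Its two points at infinity are therefore conjugate rather than rational, so the twist has no rational points at infinity, while $E$ has two (Proposition~\ref{prop:quartic-trace}). The affine counts are $\ell\pm S$, exactly as above. Hence
\[
\#E^{(d)}(\Fp)=\ell-S,\qquad t_{E^{(d)}}=1+S=-t_E .
\]

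\textbf{Main obstacle.} The only delicate point is this bookkeeping at infinity. One also has to check that the twist is well defined up to $\Fp$-isomorphism independently of the chosen model, which holds because it is determined by the square class of $d$. Alternatively, one can cite the standard fact that the twist corresponds to the cocycle given by $[-1]$, which negates Frobenius, so $t_{E^{(d)}}=-t_E$.
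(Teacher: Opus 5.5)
Your proof is correct, but it takes a different route from the paper's. The paper just cites the standard fact (Silverman, Ch.~X) that for nonsquare $d$ the twist by the cocycle $[-1]$ multiplies the Frobenius eigenvalues by $-1$. You instead fix a model $Y^2=f(X)$ with $f$ a squarefree cubic and count points with the Legendre symbol, obtaining $\#E(\Fp)=\ell+1+S$ and $\#E^{(d)}(\Fp)=\ell+1-S$. This is elementary and self-contained, in the same spirit as Proposition~\ref{prop:quartic-trace}. Your quartic count is also right: there are two rational points at infinity versus none, because $d^{-1}u$ is a nonsquare.

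What the paper's intrinsic formulation buys is model-independence and a direct match with how the lemma is used in Theorem~\ref{thm:two-valued-K}. There the twist is recognized through the descent cocycle $(X,Y)\mapsto(-X,Y)$, which is $[-1]$ with a point at infinity as origin. It is not recognized through an equation $dY^2=f(X)$ on the quartic, whose hyperelliptic involution swaps the two points at infinity. So your quartic remark does not by itself give that step. It is your cubic computation that covers the application, once you note that on $Y^2=f(X)$ with $f$ cubic, $[-1]$ is $(X,Y)\mapsto(X,-Y)$, so the $[-1]$-twist is exactly $dY^2=f(X)$.

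Two minor remarks. For $\ell=3$ a \emph{short} Weierstrass model need not exist, but a model $Y^2=f(X)$ with $f$ a squarefree cubic does, and that is all you use. Also, your model-independence worry is unnecessary for this statement, since your count shows that the twist of \emph{every} such model has trace $\chi(d)t_E$.
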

	
	\begin{proof}
		If $d$ is a square, the twist is $\Fp$-isomorphic to $E$.  If $d$ is a
		nonsquare, the quadratic twist has Frobenius eigenvalues multiplied by $-1$,
		so the trace changes sign.  This is the standard trace behavior of quadratic
		twists; see \cite[Ch.~X]{Silverman2009}.
	\end{proof}
	
	\section{Character-sum analysis of the sieve sets}
	\label{sec:one-target}
	
	We begin by making the one-target sieve completely precise, since its exact
	size formula is needed at several points.
	
	\begin{lemma}\label{lem:image-criterion}
		For $a, N \in \Fps$ and $z \in \Fp$,
		\[
		z \in H_a(N;\ell)
		\quad\Longleftrightarrow\quad
		\chi(z^2 - 4aN) \ne -1.
		\]
	\end{lemma}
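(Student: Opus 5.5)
The plan is to unwind membership in $H_a(N;\ell)$ into the solvability of a quadratic equation over $\Fp$, and then read off solvability from the discriminant. By definition, $z\in H_a(N;\ell)$ exactly when there is some $x\in\Fps$ with $ax+Nx^{-1}=z$. Multiplying through by $x\neq 0$ shows this is equivalent to $x$ being a nonzero root of
\[
aX^2 - zX + N = 0 .
\]
Since $N\in\Fps$, the value $X=0$ is never a root. So any root in $\Fp$ automatically lies in $\Fps$, and the condition becomes simply ``the quadratic has a root in $\Fp$''.

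Next I would use that $\ell$ is odd and $a\neq0$, so the quadratic formula applies: $aX^2-zX+N$ has a root in $\Fp$ if and only if its discriminant $z^2-4aN$ is a square in $\Fp$, zero included. Concretely, completing the square gives
\[
4a\,(aX^2-zX+N)=(2aX-z)^2-(z^2-4aN),
\]
and $2a$ is invertible. Hence roots $X$ correspond bijectively to $w\in\Fp$ with $w^2=z^2-4aN$, via $X=(z+w)/(2a)$.

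Finally I would translate into the Legendre symbol with the convention $\chi(0)=0$. The element $z^2-4aN$ is a square in $\Fp$, either $0$ or a nonzero square, exactly when $\chi(z^2-4aN)\in\{0,1\}$, that is, when $\chi(z^2-4aN)\neq -1$. Chaining the equivalences proves the lemma.

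There is no real obstacle here. The only points needing care are the two boundary issues: that a root $x$ is automatically nonzero, which uses $N\not\equiv 0$, and that the zero-discriminant case (a double root $x=z/(2a)$) is correctly counted as membership, which is why the criterion reads ``$\neq -1$'' rather than ``$=1$''.
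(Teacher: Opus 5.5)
Your proof is correct and is essentially the paper's argument: rewrite membership as solvability of $aX^2-zX+N=0$ in $\Fp$, then read solvability off the discriminant $z^2-4aN$ using $\chi(0)=0$. You go slightly beyond the paper by stating explicitly that $N\neq0$ forces every root to be nonzero and by deriving the discriminant criterion through completing the square; the paper simply assumes both points.
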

	
	\begin{proof}
		The equation $ax + Nx^{-1} = z$ in $x \in \Fps$ is equivalent to
		$ax^2 - zx + N = 0$.  Since $aN \ne 0$, this quadratic has discriminant
		$\Delta = z^2 - 4aN$ and a solution in $\Fp$ if and only if
		$\chi(\Delta) \ne -1$ (i.e., $\Delta$ is zero or a nonzero square).
	\end{proof}
	
	\begin{lemma}\label{lem:one-target-size}
		For $a, N \in \Fps$,
		\[
		|H_a(N;\ell)| = \frac{\ell + \chi(aN)}{2}.
		\]
		In particular, $|H_a(N;\ell)|$ equals $(\ell-1)/2$ or $(\ell+1)/2$,
		according as $aN$ is a quadratic nonresidue or quadratic residue modulo~$\ell$.
	\end{lemma}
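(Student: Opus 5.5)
We count the elements $z\in\Fp$ that satisfy the membership criterion of Lemma~\ref{lem:image-criterion}. With $D:=4aN\in\Fps$ (note $\ell$ odd, so $4$ is invertible and $\chi(D)=\chi(aN)$), that lemma says $z\in H_a(N;\ell)$ exactly when $\chi(z^2-D)\in\{0,1\}$. So we write the indicator of membership as a character expression. For $z$ with $z^2\ne D$ the indicator equals $\tfrac12(1+\chi(z^2-D))$. For the at most two $z$ with $z^2=D$ the indicator is $1$, whereas this expression gives only $\tfrac12$. Hence
\[
|H_a(N;\ell)|=\sum_{z\in\Fp}\tfrac12\bigl(1+\chi(z^2-D)\bigr)+\tfrac12\,\#\{z\in\Fp:z^2=D\}.
\]

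We then evaluate each piece. By Lemma~\ref{lem:quadratic-shift-sum}, applied with $A=D\in\Fps$, we have $\sum_z\chi(z^2-D)=-1$, so the first sum is $(\ell-1)/2$. The number of square roots of $D$ in $\Fp$ is $1+\chi(D)$, because $D\neq0$ and $\ell$ is odd. The correction term is therefore $\tfrac12(1+\chi(aN))$. Adding the two pieces gives
\[
|H_a(N;\ell)|=\frac{\ell-1}{2}+\frac{1+\chi(aN)}{2}=\frac{\ell+\chi(aN)}{2}.
\]
The ``in particular'' statement follows at once from $\chi(aN)=\pm1$.

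As an independent sanity check one can argue via fibres instead. The map $x\mapsto ax+Nx^{-1}$ on $\Fps$ identifies $x$ with $x'=N/(ax)$, since both are roots of the same quadratic $ax^2-zx+N$. This map is at most two-to-one. Its fixed points satisfy $ax^2=N$, and there are $1+\chi(aN)$ of them. So the image has size $\bigl((\ell-1)-(1+\chi(aN))\bigr)/2+(1+\chi(aN))=(\ell+\chi(aN))/2$, which agrees with the formula above.

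There is no real obstacle here. The only point needing care is the boundary case $\chi(z^2-D)=0$, where the naive indicator $\tfrac12(1+\chi)$ undercounts. The correction term above handles it exactly.
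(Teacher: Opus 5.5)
Your proposal is correct and is essentially the paper's argument. Both use Lemma~\ref{lem:image-criterion}, the identity $\sum_z\chi(z^2-4aN)=-1$ from Lemma~\ref{lem:quadratic-shift-sum}, and the root count $1+\chi(aN)$. The paper solves for $n_-$ from $n_+-n_-=-1$ and $n_++n_-+n_0=\ell$, while you write the membership indicator directly with an explicit correction at the roots of $z^2=4aN$. Your fibre-counting sanity check is also valid; it is the same involution argument the paper uses later in Proposition~\ref{prop:exact-Har-size}, taken with $G=\Fps$.
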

	
	\begin{proof}
		Let $A=4aN$.  By Lemma~\ref{lem:image-criterion}, the complement of
		$H_a(N;\ell)$ consists of the $z\in\Fp$ with $\chi(z^2-A)=-1$.  Put
		\[
		n_+=\#\{z:\chi(z^2-A)=1\},\quad
		n_-=\#\{z:\chi(z^2-A)=-1\},\quad
		n_0=\#\{z:z^2=A\}.
		\]
		Since $A\ne0$, $n_0=1+\chi(A)=1+\chi(aN)$.  Lemma~\ref{lem:quadratic-shift-sum} gives $n_+-n_-=-1$, while $n_++n_-+n_0=\ell$.  Solving for $n_-$ yields
		$n_-=(\ell-\chi(aN))/2$.  Hence
		\[
		|H_a(N;\ell)|=\ell-n_- = \frac{\ell+\chi(aN)}2. \qedhere
		\]
	\end{proof}
	
	\begin{remark}
		The one-target sieve set $H_a(N;\ell)$ has size exactly $(\ell\pm1)/2$,
		with the sign determined by whether $aN$ is a quadratic residue.  In the CRT
		product $m = \ell_1\cdots\ell_n$, the candidate set for the single linear
		combination $ap+q\pmod{m}$ has size $\prod_{i=1}^n(\ell_i+\epsilon_i)/2$
		where $\epsilon_i = \chi_{\ell_i}(aN) \in \{-1,1\}$, which is $\Oe(m/2^n)$.
	\end{remark}
	
	\section{Representing sieve sets by scaling a basis set}\label{sec:scale}
	We provide background for why the two-target sieve set union $H_\gamma(N;\ell) \cup 
	H_{\gamma^2}(N;\ell)$ for a primitive root $\gamma$ modulo $\ell$ is of particular 
	interest for potential applications of the sieve. The following lemma first explains why we 
	only 
	consider 
	the linear form $(x,y)\mapsto ax + y$, and not the more general form $(x,y)\mapsto ux + vy$.
	
	\begin{lemma}\label{lem:linearform}
		Let $a, N \in \Fps$. For all $(u,v)\in  \Fps\times \Fps$ satisfying $a = uv$, we have
		\[
		H_a(N;\ell)=\{ux+vNx^{-1}: x\in \Fps\}.
		\]
	\end{lemma}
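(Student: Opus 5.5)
The plan is a direct substitution: reparametrize the variable so that the map $x\mapsto ux+vNx^{-1}$ on $\Fps$ becomes the defining map of $H_a(N;\ell)$ in \eqref{eq:Ha-def}. No character-sum input is needed. The identity holds at the level of images of $\Fps$ under bijective reparametrizations.

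Fix $(u,v)\in\Fps\times\Fps$ with $uv=a$. For $x\in\Fps$, put $y=x/v$, which is legitimate since $v\neq0$; then $x=vy$. We compute
\[
ux+vNx^{-1}=uvy+vN(vy)^{-1}=ay+Ny^{-1}.
\]
The map $x\mapsto x/v$ is a bijection of $\Fps$ onto itself, with inverse $y\mapsto vy$. Hence, as $x$ runs over $\Fps$, the element $y$ runs over all of $\Fps$. Therefore
\[
\{ux+vNx^{-1}:x\in\Fps\}=\{ay+Ny^{-1}:y\in\Fps\}=H_a(N;\ell).
\]

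The only point to check is that the substitution really is a bijection of $\Fps$. This follows from $v\neq0$, so there is no genuine obstacle.

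As a cross-check, one can also use Lemma~\ref{lem:image-criterion}. The equation $ux+vN/x=z$ is equivalent to $ux^2-zx+vN=0$, whose discriminant is $z^2-4uvN=z^2-4aN$. So membership of $z$ in the set on the right is governed by the same criterion $\chi(z^2-4aN)\neq-1$ as membership in $H_a(N;\ell)$. The two sets therefore coincide.
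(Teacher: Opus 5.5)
Your proof is correct. The paper gives no argument of its own here; it only states that, up to notation, the lemma is \cite[Lemma 3.2]{Hittmeir2023}. Your substitution $x=vy$ is therefore a self-contained replacement for that citation. It also shows slightly more than the set equality. The maps $x\mapsto ux+vNx^{-1}$ and $y\mapsto ay+Ny^{-1}$ on $\Fps$ differ only by a bijective reparametrization, so their fibre sizes agree as well as their images.

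Your discriminant cross-check is an independent second proof via Lemma~\ref{lem:image-criterion}. There you should say explicitly that every root of $ux^2-zx+vN$ is automatically nonzero, because $vN\neq0$. That fact is what lets you pass from roots in $\mathbb{F}_\ell$ to parameters $x\in\Fps$, just as in the proof of Lemma~\ref{lem:image-criterion}.
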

	
	\begin{proof}
		Up to a change in notation, this is \cite[Lemma 3.2]{Hittmeir2023}.
	\end{proof}
	
	\begin{lemma}\label{lem:scale}
		Let $a,N \in \Fps$ and $\gamma$ be an abitrary primitive root modulo  $\ell$. If $\chi(a)=1$, then 
		\[
		H_a(N;\ell)=\{\theta_a\cdot s \mid s\in H_{\gamma^2}(N;\ell)\},
		\]
		where $\theta_a$ is a square root of $a\gamma^{-2}$. 
		If $\chi(a)=-1$, then 
		\[
		H_a(N;\ell)=\{\theta_a\cdot s \mid s\in H_{\gamma}(N;\ell)\},
		\]
		where $\theta_a$ is a square root of $a\gamma^{-1}$.
	\end{lemma}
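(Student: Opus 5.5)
The plan is to prove the scaling identity by a direct substitution, using Lemma~\ref{lem:linearform} to absorb the scaling factor into the coefficients of the linear form. Consider first the case $\chi(a)=1$. Since $\gamma^2$ is a square, $a\gamma^{-2}$ is a nonzero square, so some $\theta_a\in\Fps$ satisfies $\theta_a^2=a\gamma^{-2}$. For $s=\gamma^2x+Nx^{-1}\in H_{\gamma^2}(N;\ell)$ we have
\[
\theta_a s=(\theta_a\gamma^2)x+\theta_a N x^{-1}.
\]
Put $u=\theta_a\gamma^2$ and $v=\theta_a$. Then $uv=\theta_a^2\gamma^2=a$, and Lemma~\ref{lem:linearform} gives
\[
\{ux+vNx^{-1}:x\in\Fps\}=H_a(N;\ell).
\]
The left-hand side is exactly $\{\theta_a s: s\in H_{\gamma^2}(N;\ell)\}$, because $x$ runs over the same index set $\Fps$ on both sides. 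This proves the first claim.

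The case $\chi(a)=-1$ is the same argument with $\gamma$ in place of $\gamma^2$. Since $\gamma$ is a primitive root, it is a nonsquare (for odd $\ell$, an element of order $\ell-1$ cannot lie in the index-two subgroup of squares). Hence $a\gamma^{-1}$ is a square, and some $\theta_a$ satisfies $\theta_a^2=a\gamma^{-1}$. Take $u=\theta_a\gamma$ and $v=\theta_a$, so that $uv=a$. Lemma~\ref{lem:linearform} then yields $\theta_a H_\gamma(N;\ell)=H_a(N;\ell)$.

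There is no real obstacle here. The only points needing care are that $\theta_a$ exists in $\Fp$ in each case, which is where the hypothesis on $\chi(a)$ and the nonsquareness of $\gamma$ enter, and that the parametrization over $x\in\Fps$ matches on both sides. Both follow immediately from the definitions, so the substance of the proof lies entirely in the cited Lemma~\ref{lem:linearform}.
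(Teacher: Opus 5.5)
Your proof is correct and is essentially the paper's argument. The paper carries out the change of variables $x\mapsto x\theta_a^{-1}$ by hand to get both inclusions, while you package that same substitution as Lemma~\ref{lem:linearform} with $(u,v)=(\theta_a\gamma^2,\theta_a)$; you also justify the existence of $\theta_a$ (via the nonsquareness of $\gamma$), which the paper leaves implicit.
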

	
	\begin{proof}
		We start with the case $\chi(a)=1$. Let $s \in H_{\gamma^2}(N;\ell)$ be arbitrary, 
		then $s=\gamma^2x+Nx^{-1}$ for some $x\in\Fps$. We have
		\[
		\theta_a s = (\theta_a\gamma^2)x+\theta_aNx^{-1} = 
		\theta_a^2\gamma^2(x\theta_a^{-1})+N(x\theta_a^{-1})^{-1}=a(x\theta_a^{-1})+N(x\theta_a^{-1})^{-1}.
		\]
		So $\theta_a s$ is an element of the set $H_a(N;\ell)$. Consider now 
		an arbitrary element $az+Nz^{-1}$ in the set $H_a(N;\ell)$.  Then there exists 
		$x\in\Fps$ such that $z=x\theta_a^{-1}$, and the equation above shows that the element is of
		the form $\theta_a s$ for some $s\in H_{\gamma^2}(N;\ell)$. This proves the claim for the  case  $\chi(a)=1$. The proof for the case $\chi(a)=-1$ is similar. 	
	\end{proof}
	
	Lemma \ref{lem:scale} allows us to represent $H_a(N;\ell)$ for any $a\in\Fps$ via the sets 	$H_\gamma(N;\ell)$ and $H_{\gamma^2}(N;\ell)$. We now show how to represent  hyperbolic sieve sets modulo products of primes by scaling a specific basis set. By abuse of notation, we identify Cartesian products of subsets of $\F_{\ell_i}$ with their images in $\Z_m$ under the inverse Chinese remainder isomorphism.
	
	\begin{theorem}\label{thm:scale}
		Let $m=\ell_1\ldots\ell_n$ be a product of $n$ distinct primes not dividing  $N$, let $a\in\Z_m^*$ and $\gamma_i$ be a primitive root modulo $\ell_i$. Then
		\[
		\prod_{i=1}^n H_a(N;\ell_i) = \left\{\theta_a\cdot s \mod m \,\bigg|\, s\in	\prod_{i=1}^n H_{\Gamma_i}(N;\ell_i)\right\},
		\]
		where $\theta_a\in\Z_m^*$ and $\Gamma_i\in\F_{\ell_i}^*$ are chosen such that, for every $i=1,\ldots,n$,	
		\[
		\begin{aligned}
			\theta_a^2 \equiv
			\begin{cases}
				a\gamma_i^{-2}  \pmod{\ell_i}, & \chi_{\ell_i}(a) = 1,\\
				a\gamma_i^{-1} \pmod{\ell_i}, & \chi_{\ell_i}(a) = -1.
			\end{cases}
		\end{aligned}
		\hspace{2cm}
		\begin{aligned}
			\Gamma_i =
			\begin{cases}
				\gamma_i^2,   & \chi_{\ell_i}(a) = 1,\\
				\gamma_i, & \chi_{\ell_i}(a) = -1.
			\end{cases}
		\end{aligned}
		\]		
	\end{theorem}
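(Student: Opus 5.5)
The plan is to reduce the statement prime by prime to Lemma~\ref{lem:scale} via the Chinese Remainder Theorem. First I will check that the objects in the statement exist. For each $i$, the target residue $a\gamma_i^{-2}$ (when $\chi_{\ell_i}(a)=1$) or $a\gamma_i^{-1}$ (when $\chi_{\ell_i}(a)=-1$) is a nonzero square modulo $\ell_i$. In the first case this is because $a$ and $\gamma_i^{-2}$ are both squares. In the second it is because $a$ and $\gamma_i^{-1}$ are both nonsquares, as $\gamma_i$ is a primitive root and hence a nonsquare, so the product is a square. Choose a square root $\theta_i\in\F_{\ell_i}^*$ of this residue. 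The CRT then yields a unique $\theta_a\in\Z_m^*$ with $\theta_a\equiv\theta_i\pmod{\ell_i}$ for all $i$. This $\theta_a$ satisfies the displayed congruences, and it is a unit because each $\theta_i\neq0$.

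Next I will verify the set equality componentwise. Under the identification of $\Z_m$ with $\prod_i\F_{\ell_i}$, multiplication by $\theta_a$ modulo $m$ acts on the $i$-th coordinate as multiplication by $\theta_a \bmod \ell_i=\theta_i$. So the right-hand side is the Cartesian product $\prod_i\{\theta_i s_i: s_i\in H_{\Gamma_i}(N;\ell_i)\}$. For each $i$, Lemma~\ref{lem:scale} applied at the prime $\ell_i$ with primitive root $\gamma_i$, the element $a\bmod\ell_i$, and the square root $\theta_i$ says that the $i$-th factor equals $H_a(N;\ell_i)$. Since a product of equal factors is equal, this gives the claimed identity.

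The only subtle point is that Lemma~\ref{lem:scale} must hold for \emph{any} choice of square root $\theta_a$, not a particular one. Otherwise the single CRT-lifted $\theta_a$ might not serve at every prime simultaneously. The proof of Lemma~\ref{lem:scale} uses only $\theta_a^2\gamma^2=a$ (resp.\ $\theta_a^2\gamma=a$), so either sign works. I would state this explicitly, along with the fact that $\ell_i\nmid N$ ensures each local set is defined.
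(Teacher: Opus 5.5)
Your proposal is correct and follows the paper's proof essentially step for step: you choose a local square root at each prime, lift to $\theta_a$ by the CRT, identify multiplication by $\theta_a$ with coordinatewise scaling, and apply Lemma~\ref{lem:scale} at each $\ell_i$. Your explicit checks that $a\gamma_i^{-2}$ resp.\ $a\gamma_i^{-1}$ is a nonzero square, and that Lemma~\ref{lem:scale} holds for either square root (so it covers every admissible $\theta_a$), are details the paper leaves implicit.
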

	\begin{proof}
		For each $i=1,\ldots,n$, choose $\theta_{a,i}\in\F_{\ell_i}^*$ satisfying
		\[
		\theta_{a,i}^2 =
		\begin{cases}
			a\gamma_i^{-2}, & \chi_{\ell_i}(a)=1,\\
			a\gamma_i^{-1}, & \chi_{\ell_i}(a)=-1.
		\end{cases}
		\]
		Let $\theta_a\in\Z_m^*$ be the unique element such that
		$\theta_a\equiv \theta_{a,i}\pmod{\ell_i}$ for all $i$. One easily observes that
		\[\left\{\theta_a\cdot s \mod m \,\bigg|\, s\in	
		\prod_{i=1}^n 
		H_{\Gamma_i}(N;\ell_i)\right\}=
		\prod_{i=1}^n
		\left\{
		\theta_{a,i}\cdot s_i
		\,\middle|\,
		s_i\in H_{\Gamma_i}(N;\ell_i)
		\right\}
		\]
		under our Chinese remainder identification. The statement then follows from Lemma~\ref{lem:scale}.
	\end{proof}
	
	The main disadvantage of Theorem \ref{thm:scale} is that the base set for the values $s$ on the 
	right-hand side still depends on $a$. We can get rid of this restriction by 
	covering the base set with the Cartesian products of the unions 
	$U_{\gamma_i,\gamma_i^2}(N;\ell_i)=H_{\gamma_i}(N;\ell_i)\cup H_{\gamma^2_i}(N;\ell_i)$, 
	allowing us to obtain the following two results.
	\begin{corollary}
		With the same notation as in Theorem \ref{thm:scale},  we have 
		\[
		\prod_{i=1}^n H_a(N;\ell_i) \subseteq \left\{\theta_a\cdot s \mod m \,\bigg|\, s\in	
		\prod_{i=1}^n 
		U_{\gamma_i,\gamma_i^2}(N;\ell_i)\right\}.
		\]
	\end{corollary}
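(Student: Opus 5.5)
The corollary follows from Theorem~\ref{thm:scale} together with a coordinatewise monotonicity observation, so I would not redo any character-sum analysis. Keep the notation of Theorem~\ref{thm:scale}: the same $\theta_a\in\Z_m^*$ and the same base coefficients $\Gamma_i\in\{\gamma_i,\gamma_i^2\}$, chosen according to $\chi_{\ell_i}(a)$. That theorem gives the equality
\[
\prod_{i=1}^n H_a(N;\ell_i)=\left\{\theta_a\cdot s \bmod m \,\middle|\, s\in\prod_{i=1}^n H_{\Gamma_i}(N;\ell_i)\right\}.
\]

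The key step is local. Whichever of the two cases occurs at $\ell_i$, the coefficient $\Gamma_i$ equals $\gamma_i$ or $\gamma_i^2$. Hence $H_{\Gamma_i}(N;\ell_i)\subseteq H_{\gamma_i}(N;\ell_i)\cup H_{\gamma_i^2}(N;\ell_i)=U_{\gamma_i,\gamma_i^2}(N;\ell_i)$. Taking Cartesian products, this gives $\prod_i H_{\Gamma_i}(N;\ell_i)\subseteq\prod_i U_{\gamma_i,\gamma_i^2}(N;\ell_i)$ under the CRT identification.

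Finally, the map $s\mapsto\theta_a s \bmod m$ is applied elementwise, so the image of a subset lies inside the image of the larger set. Combining this with the displayed equality yields the claimed inclusion.

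There is no genuine obstacle here. The one point to state explicitly is that the same $\theta_a$ serves every $s$ in the larger product: $\theta_a$ depends only on $a$ and the chosen $\gamma_i$, not on $s$ or on which base set $s$ is drawn from. The argument requires only that multiplication by $\theta_a$ be well defined; invertibility of $\theta_a$ is not needed.
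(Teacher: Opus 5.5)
Your proposal is correct and follows the paper's own reasoning. The paper gives no separate proof; it just remarks that the corollary comes from covering each base set $H_{\Gamma_i}(N;\ell_i)$ by $U_{\gamma_i,\gamma_i^2}(N;\ell_i)$ in Theorem~\ref{thm:scale}, and your local inclusion, Cartesian-product step, and elementwise scaling by $\theta_a$ make exactly that remark rigorous.
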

	\begin{corollary}
		With the same notation as in Theorem \ref{thm:scale} and $\theta\in \Z_m^*$,  we have 
		\[
		\prod_{i=1}^n H_a(N;\ell_i) \subseteq \left\{\theta\cdot s \mod m \,\bigg|\, s\in	
		\prod_{i=1}^n 
		U_{\gamma_i,\gamma_i^2}(N;\ell_i)\right\}
		\]
		for any $a\in\Z_m^*$ that satisfies $a\pmod{\ell_i}\in\{\theta^2\gamma_i \pmod{\ell_i}, 
		\theta^2\gamma_i^2 \pmod{\ell_i}\}$ for 
		all $i=1,\ldots,n$.
	\end{corollary}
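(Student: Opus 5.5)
The plan is to reduce the statement prime by prime to Lemma~\ref{lem:scale} and then use the containment $H_{\Gamma_i}\subseteq U_{\gamma_i,\gamma_i^2}$. By the Chinese remainder identification, scaling by $\theta\in\Z_m^*$ acts coordinatewise, exactly as in the proof of Theorem~\ref{thm:scale}. Writing $\theta_i=\theta\bmod \ell_i$, we have
\[
\left\{\theta\cdot s \bmod m : s\in\prod_{i=1}^n U_{\gamma_i,\gamma_i^2}(N;\ell_i)\right\}
=\prod_{i=1}^n\left\{\theta_i s_i : s_i\in U_{\gamma_i,\gamma_i^2}(N;\ell_i)\right\}.
\]
Since Cartesian products respect inclusion factorwise, it suffices to show, for each $i$ separately, that $H_a(N;\ell_i)\subseteq \theta_i\cdot U_{\gamma_i,\gamma_i^2}(N;\ell_i)$.

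Fix $i$ and split into the two cases allowed by the hypothesis.

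\textbf{Case $a\equiv\theta^2\gamma_i^2$.} Then $a$ is a nonzero square modulo $\ell_i$, and $a\gamma_i^{-2}\equiv\theta_i^2$. So $\theta_i$ is a legitimate choice of the square root $\theta_a$ in Lemma~\ref{lem:scale}. The lemma holds for any choice of square root, since its proof uses only $\theta_a^2=a\gamma^{-2}$. It gives $H_a(N;\ell_i)=\theta_i\cdot H_{\gamma_i^2}(N;\ell_i)$.

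\textbf{Case $a\equiv\theta^2\gamma_i$.} Since $\gamma_i$ is a primitive root, it is a nonsquare, so $\chi_{\ell_i}(a)=-1$. Moreover $a\gamma_i^{-1}\equiv\theta_i^2$, and the second half of Lemma~\ref{lem:scale} gives $H_a(N;\ell_i)=\theta_i\cdot H_{\gamma_i}(N;\ell_i)$.

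In either case the right-hand side is contained in $\theta_i\cdot\bigl(H_{\gamma_i}\cup H_{\gamma_i^2}\bigr)=\theta_i\cdot U_{\gamma_i,\gamma_i^2}(N;\ell_i)$. Taking the product over $i$ completes the argument.

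The only point needing care is the observation that Lemma~\ref{lem:scale} is valid for an arbitrary square root. This matters because here $\theta$ is prescribed in advance and independent of $a$, rather than being constructed from $a$ as in Theorem~\ref{thm:scale}. It is immediate from the lemma's proof, so I expect no genuine obstacle. The rest is bookkeeping with the CRT identification, in particular checking that $\theta\in\Z_m^*$ ensures each $\theta_i\neq0$.
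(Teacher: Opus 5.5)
Your proposal is correct and follows the argument the paper intends; the paper gives no separate proof. The hypothesis on $a$ says exactly that $\theta$ is an admissible choice of $\theta_a$ in Theorem~\ref{thm:scale} (with $\Gamma_i=\gamma_i^2$ or $\gamma_i$ according to $\chi_{\ell_i}(a)$), and one then enlarges each $H_{\Gamma_i}(N;\ell_i)$ to $U_{\gamma_i,\gamma_i^2}(N;\ell_i)$; your prime-by-prime use of Lemma~\ref{lem:scale} is that same argument unpacked through the CRT identification.
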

	
	These results allow to cover hyperbolic sieve sets for a large number of different linear 
	forms $(x,y)\mapsto ax + y$ with a single superset created from 
	$U_{\gamma_i,\gamma_i^2}(N;\ell_i)$. However, the statements are only meaningful if 
	$U_{\gamma_i,\gamma_i^2}(N;\ell_i)$ is 
	substantially smaller than $\F_{\ell_i}$. This is shown in the subsequent sections.
	
	\section{The two-target union: exact formula}
	\label{sec:two-target}
	
	With the one-target picture analyzed in Section \ref{sec:one-target}, we now turn to the union
	$U_{a,b} := H_a(N;\ell) \cup H_b(N;\ell)$ for distinct $a, b \in \Fps$.
	Set $A = 4aN$ and $B = 4bN$.  By Lemma~\ref{lem:image-criterion}, a residue
	$z\in\Fp$ lies outside $U_{a,b}$ if and only if both $z^2-A$ and $z^2-B$
	are quadratic nonresidues modulo~$\ell$.  The size of the complement is
	controlled by the simultaneous nonresiduosity of two quadratic forms, which
	is a character-sum problem.
	
	\begin{theorem}\label{thm:exact-union}
		Let $a, b, N \in \Fps$ with $a \ne b$.  Set $A = 4aN$, $B = 4bN$,
		$r_A = 1 + \chi(A) \in \{0,2\}$, $r_B = 1 + \chi(B) \in \{0,2\}$.  Then
		\[
		4\,|\Fp \setminus U_{a,b}|
		= \ell + 2 - r_A - r_B + K(A,B) + r_B\chi(B-A) + r_A\chi(A-B),
		\]
		where $K(A,B) = \sum_{z \in \Fp}\chi\bigl((z^2-A)(z^2-B)\bigr)$.
	\end{theorem}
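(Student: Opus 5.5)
The plan is to write the indicator of the complement $\Fp\setminus U_{a,b}$ as a product of quadratic-character expressions, sum over $z\in\Fp$, and correct for the finitely many $z$ at which one of the two arguments vanishes. By Lemma~\ref{lem:image-criterion}, $z\notin U_{a,b}$ exactly when $\chi(z^2-A)=-1$ and $\chi(z^2-B)=-1$.

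For $z$ with $z^2\ne A$ and $z^2\ne B$, both characters lie in $\{\pm1\}$. In that case the indicator equals
\[
\tfrac14\bigl(1-\chi(z^2-A)\bigr)\bigl(1-\chi(z^2-B)\bigr).
\]
At a $z$ with $z^2=A$ the true indicator is $0$. There the expression above instead evaluates to $\tfrac14\bigl(1-\chi(A-B)\bigr)$, because $\chi(z^2-A)=0$ and $z^2-B=A-B$. Similarly, at a $z$ with $z^2=B$ the expression gives $\tfrac14\bigl(1-\chi(B-A)\bigr)$. Since $a\ne b$ we have $A\ne B$, so these two exceptional sets are disjoint. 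They have sizes $1+\chi(A)=r_A$ and $1+\chi(B)=r_B$ respectively.

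The key steps are therefore the following.
\begin{enumerate}
\item Sum the product expression over all $z\in\Fp$ and expand it. This gives
\[
\tfrac14\Bigl(\ell-\sum_z\chi(z^2-A)-\sum_z\chi(z^2-B)+K(A,B)\Bigr).
\]
\item Evaluate the two middle sums as $-1$ each by Lemma~\ref{lem:quadratic-shift-sum}, using $A,B\in\Fps$. The total becomes $(\ell+2+K(A,B))/4$.
\item Subtract the spurious contributions of the exceptional $z$. These total
\[
\tfrac14 r_A\bigl(1-\chi(A-B)\bigr)+\tfrac14 r_B\bigl(1-\chi(B-A)\bigr).
\]
\item Multiply through by $4$. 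This yields exactly
\[
\ell+2-r_A-r_B+K(A,B)+r_A\chi(A-B)+r_B\chi(B-A).
\]
\end{enumerate}

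There is no serious obstacle; the only point requiring care is the boundary bookkeeping. Concretely, one must check three things. First, the product formula is valid precisely off the zeros of $z^2-A$ and $z^2-B$. Second, the zero sets are disjoint because $A\ne B$. Third, $\chi(A-B)$ and $\chi(B-A)$ are nonzero, so the corrections are correctly valued in $\{0,\tfrac12\}\cdot r$.
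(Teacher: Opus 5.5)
Your proposal is correct and is essentially the paper's proof. The paper builds the root corrections into the exact indicator $I_A(z)=\bigl(1-\chi(z^2-A)-\delta_A(z)\bigr)/2$ before expanding the product, while you expand $\tfrac14\bigl(1-\chi(z^2-A)\bigr)\bigl(1-\chi(z^2-B)\bigr)$ and then subtract the spurious values at the roots of $z^2=A$ and $z^2=B$; because $\delta_A\delta_B=0$ when $A\ne B$, both routes rest on the same identity and give the same mixed terms $r_A\chi(A-B)$ and $r_B\chi(B-A)$.
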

	
	\begin{proof}
		For $C \in \{A,B\}$ define $\delta_C(z) = \mathbf{1}[z^2 = C]$; note
		$\sum_{z\in\Fp}\delta_C(z) = r_C$ since $z^2=C$ has $1+\chi(C)$ solutions
		(or zero if $C=0$, but $C\in\{A,B\}\subset 4N\Fps$, hence $C\ne 0$).  By
		Lemma~\ref{lem:image-criterion}, $z\notin H_a(N;\ell)$ iff $\chi(z^2-A)=-1$,
		i.e., $z^2-A$ is a nonzero nonsquare.  The indicator of this event is
		\[
		I_A(z) := \frac{1 - \chi(z^2-A) - \delta_A(z)}{2},
		\]
		and similarly $I_B(z)$ for $B$.  Then $|\Fp\setminus U_{a,b}| = \sum_z I_A(z)I_B(z)$,
		so
		\begin{align*}
			4|\Fp\setminus U_{a,b}|
			&= \sum_{z\in\Fp}
			\bigl(1 - \chi(z^2-A) - \delta_A(z)\bigr)
			\bigl(1 - \chi(z^2-B) - \delta_B(z)\bigr).
		\end{align*}
		Expanding the product, the constant term contributes $\ell$.  By
		Lemma~\ref{lem:quadratic-shift-sum}, the two linear character sums satisfy
		$-\sum_z\chi(z^2-A)=-\sum_z\chi(z^2-B)=1$.  The two root indicators
		contribute $\sum_z\delta_A(z)=r_A$ and $\sum_z\delta_B(z)=r_B$, while the
		pure product of the two character sums is exactly $K(A,B)$.  The mixed terms
		with one character and one root indicator are supported at the roots of
		$z^2=A$ or $z^2=B$, and hence contribute respectively
		$r_B\chi(B-A)$ and $r_A\chi(A-B)$.  Finally,
		$\sum_z\delta_A(z)\delta_B(z)=0$, since $A\ne B$.  Collecting these terms
		gives
		\[
		4|\Fp\setminus U_{a,b}|
		= \ell +2-r_A-r_B+K(A,B)+r_B\chi(B-A)+r_A\chi(A-B),
		\]
		which is the desired formula.
	\end{proof}
	
	
	The character sum $K(A,B)$ has a geometric meaning.  Since $A,B\ne0$ and
	$A\ne B$, the polynomial $f_{A,B}(X)=(X^2-A)(X^2-B)$ is squarefree of degree
	$4$ and has leading coefficient $1$.  Proposition~\ref{prop:quartic-trace}
	applied to the smooth projective model $\mathcal E_{A,B}$ of
	$Y^2=f_{A,B}(X)$ gives
	\[
	\#\mathcal E_{A,B}(\Fp)=\ell+K(A,B)+2
	\quad\text{and}\quad
	t(\mathcal E_{A,B})=-K(A,B)-1.
	\]
	The Hasse--Weil estimate in Theorem~\ref{thm:hasse-weil-genus-one} therefore
	immediately gives the following bound.
	
	\begin{corollary}\label{cor:hasse-weil}
		For all $A, B \in \Fps$ with $A\ne B$,
		\[
		|K(A,B) + 1| \le 2\sqrt\ell.
		\]
	\end{corollary}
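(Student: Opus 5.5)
The corollary follows directly from the geometric identification made just before it, so the plan is simply to chain the earlier results together.

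Fix $A,B\in\Fps$ with $A\ne B$ and set $f_{A,B}(X)=(X^2-A)(X^2-B)$.

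The first step is to check that $f_{A,B}$ satisfies the hypotheses of Proposition~\ref{prop:quartic-trace}. It is monic, so its leading coefficient is a square. It is squarefree: the roots of $X^2-A$ are distinct because $A\ne0$ and $\ell$ is odd, and likewise for $X^2-B$. The two quadratic factors share no root, since a common root $x$ would satisfy $x^2=A=B$, contradicting $A\ne B$.

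Next, apply Proposition~\ref{prop:quartic-trace} to the smooth projective model $\mathcal E_{A,B}$ of $Y^2=f_{A,B}(X)$. It shows that $\mathcal E_{A,B}$ is a smooth projective genus-one curve, and that its Frobenius trace is
\[
t(\mathcal E_{A,B})=-1-\sum_{x\in\Fp}\chi(f_{A,B}(x))=-1-K(A,B).
\]

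Finally, invoke Theorem~\ref{thm:hasse-weil-genus-one}, which gives $|t(\mathcal E_{A,B})|\le 2\sqrt\ell$. Substituting the trace formula yields $|K(A,B)+1|\le2\sqrt\ell$, as claimed.

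There is no real obstacle here; the only point needing care is the squarefreeness verification. As a sanity check, one could instead use Theorem~\ref{thm:weil-character-sum} with $d=4$, since $f_{A,B}$ is not a square in $\Fp[X]$. That route only gives the weaker bound $|K|\le3\sqrt\ell$, so the genus-one route above is the one to use.
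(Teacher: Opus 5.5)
Your proposal is correct and follows the paper's proof. The paper likewise observes that $f_{A,B}=(X^2-A)(X^2-B)$ is a monic squarefree quartic, applies Proposition~\ref{prop:quartic-trace} to obtain $t(\mathcal E_{A,B})=-1-K(A,B)$, and then invokes Theorem~\ref{thm:hasse-weil-genus-one}; your explicit squarefreeness check spells out what the paper only asserts in the paragraph before the corollary.
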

	
	\begin{proof}
		The preceding paragraph gives $K(A,B)+1=-t(\mathcal E_{A,B})$.  The result is
		therefore exactly Theorem~\ref{thm:hasse-weil-genus-one} applied to
		$\mathcal E_{A,B}$.
	\end{proof}
	
	\section{From the exact formula to the full two-target picture}
	\label{sec:results}
	
	\subsection{The primitive-root specialization}
	
	Specialize to $a = \gamma$, $b = \gamma^2$ where $\gamma$ is a primitive root
	modulo $\ell$.  Since $\gamma$ generates $\Fps$, it is a quadratic nonresidue
	($\chi(\gamma) = -1$) and $\gamma^2$ is a quadratic residue ($\chi(\gamma^2)=1$).
	Hence $r_A = 1+\chi(4\gamma N) = 1+\chi(\gamma)\chi(N) = 1-\chi(N)$ and
	$r_B = 1+\chi(\gamma^2)\chi(N) = 1+\chi(N)$, so $r_A + r_B = 2$.
	
	\begin{corollary}\label{cor:primitive-root-formula}
		Let $\gamma$ be a primitive root modulo $\ell$, $A = 4\gamma N$,
		$B = 4\gamma^2 N$.  Define
		\[
		E(A,B) := r_B\,\chi(B-A) + r_A\,\chi(A-B) \in \{-2,2\}.
		\]
		Then
		\[
		|U_{\gamma,\gamma^2}| = \frac{3\ell - K(A,B) - E(A,B)}{4}.
		\]
	\end{corollary}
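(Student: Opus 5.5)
The plan is to specialize Theorem~\ref{thm:exact-union} to $a=\gamma$, $b=\gamma^2$ and then use $|U_{\gamma,\gamma^2}|=\ell-|\Fp\setminus U_{\gamma,\gamma^2}|$.

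First, the hypotheses of Theorem~\ref{thm:exact-union} hold. Since $\gamma$ has order $\ell-1\ge 2$, we have $\gamma\ne\gamma^2$, so $a\ne b$. Also $\gamma,\gamma^2,N\in\Fps$, hence $A,B\in\Fps$ and $A\ne B$. As computed just before the statement, $\chi(\gamma)=-1$ and $\chi(\gamma^2)=1$, together with $\chi(4)=1$, give
\[
r_A=1-\chi(N),\qquad r_B=1+\chi(N),\qquad r_A+r_B=2 .
\]
Substituting into the exact formula yields
\[
4|\Fp\setminus U_{\gamma,\gamma^2}|=\ell+2-2+K(A,B)+E(A,B)=\ell+K(A,B)+E(A,B).
\]
Therefore $4|U_{\gamma,\gamma^2}|=4\ell-\ell-K(A,B)-E(A,B)$, which is the displayed formula.

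It remains to justify $E(A,B)\in\{-2,2\}$. Since $\chi(N)=\pm1$, exactly one of $r_A,r_B$ equals $2$ and the other equals $0$. Hence $E(A,B)=2\chi(\pm(B-A))$. Here $B-A=4\gamma N(\gamma-1)$, which is nonzero because $\gamma\ne1$. So $\chi(\pm(B-A))=\pm1$, and $E(A,B)\in\{-2,2\}$.

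No step is a real obstacle. The only care needed is to confirm that $B-A\ne0$, so the character value is never $0$, and that the constant terms $2-r_A-r_B$ cancel exactly.
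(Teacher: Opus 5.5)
Your proposal is correct and follows the paper's proof essentially verbatim: you substitute $r_A+r_B=2$ into Theorem~\ref{thm:exact-union}, solve for $|U_{\gamma,\gamma^2}|$, and note that exactly one of $r_A,r_B$ equals $2$, so $\chi$ is evaluated at the nonzero element $\pm(B-A)$. Your explicit checks that $\gamma\ne\gamma^2$ and that $B-A=4\gamma N(\gamma-1)\neq 0$ are small additions the paper leaves implicit.
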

	
	\begin{proof}
		Substituting $r_A + r_B = 2$ into Theorem~\ref{thm:exact-union} gives
		$4|\Fp\setminus U_{a,b}| = \ell + 2 - 2 + K(A,B) + E(A,B) = \ell + K(A,B) + E(A,B)$.
		So $|U_{\gamma,\gamma^2}| = \ell - (\ell + K(A,B)+E(A,B))/4 = (3\ell-K(A,B)-E(A,B))/4$.
		For $E$: since $\gamma$ is a nonsquare, exactly one of $r_A,r_B$ equals $2$
		and the other $0$.  If $r_A=2$ then $E=2\chi(A-B)$; if $r_B=2$ then
		$E=2\chi(B-A)$.  In either case $\chi$ is evaluated at the nonzero element
		$A-B$ or $B-A$ respectively, giving $E\in\{-2,2\}$.
	\end{proof}
	
	The preceding formula shows that the primitive-root case is controlled by the
	single character sum $K(A,\gamma A)$.  We next record the structural fact that
	makes this dependence especially rigid: for a fixed ratio, the elliptic curves
	that occur are only one curve and its quadratic twist.
	
	\begin{theorem}\label{thm:two-valued-K}
		Let $c\in\Fps$ with $c\ne1$, and let
		\[
		\mathcal E_c:\quad Y^2=(X^2-1)(X^2-c)
		\]
		be the smooth projective model over $\Fp$.  With the convention of
		Section~\textup{\ref{sec:prelim}}, write $t_c=\ell+1-\#\mathcal E_c(\Fp)$
		for its Frobenius trace.  Then, for every
		$A\in\Fps$,
		\[
		K(A,cA)=-1-\chi(A)t_c.
		\]
		Equivalently,
		\[
		K(A,cA)=
		\begin{cases}
			-1-t_c, & A\text{ a square},\\
			-1+t_c, & A\text{ a nonsquare}.
		\end{cases}
		\]
	\end{theorem}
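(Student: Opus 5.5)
The plan is a direct substitution in the character sum. Fix $A\in\Fps$.

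\emph{Square case.} If $A$ is a square, write $A=s^2$ with $s\in\Fps$ and substitute $z=sw$. Since $w\mapsto sw$ is a bijection of $\Fp$,
\[
K(A,cA)=\sum_{w\in\Fp}\chi\bigl((s^2w^2-s^2)(s^2w^2-cs^2)\bigr)=\chi(s^4)\sum_{w}\chi\bigl((w^2-1)(w^2-c)\bigr)=K(1,c).
\]
The polynomial $(X^2-1)(X^2-c)$ is squarefree, because $c\ne0$ and $c\ne1$, and it is monic. Hence Proposition~\ref{prop:quartic-trace} applies and gives $t_c=-1-K(1,c)$, that is, $K(A,cA)=-1-t_c$.

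\emph{Nonsquare case.} If $A$ is a nonsquare, fix a nonsquare $g$ and write $A=gs^2$. The same substitution gives $K(A,cA)=K(g,cg)$, so it suffices to show that $K(g,cg)=-1+t_c$.

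Here I would argue by point counting rather than by quoting Lemma~\ref{lem:quadratic-twist-trace} directly. The curve $Y^2=(X^2-g)(X^2-cg)$ is related to $\mathcal E_c$ by an $\mathbb F_{\ell^2}$-isomorphism: take $\sqrt g\in\mathbb F_{\ell^2}$ and set $X=\sqrt g\,X'$, $Y=g\,Y'$. This turns the curve into $Y'^2=(X'^2-1)(X'^2-c)$. However, the Frobenius action differs by the sign of $\sqrt g$, so it is cleaner to avoid identifying the twist abstractly.

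The elementary identity I would aim for is
\[
K(1,c)+K(g,cg)=-2.
\]
To prove it, write the summand as $\chi(F(z^2))$ with $F(u)=(u-1)(u-c)$. Let $\mathrm{Sq}$ denote the set of nonzero squares in $\Fp$. As $z$ ranges over $\Fp$, the value $z^2$ is $0$ once and covers each $u\in\mathrm{Sq}$ twice, so
\[
K(1,c)=\chi(c)+2\sum_{u\in \mathrm{Sq}}\chi(F(u)).
\]
Similarly, the substitution $u=gv$ gives
\[
K(g,cg)=\chi(g^2c)+2\sum_{v\in \mathrm{Sq}}\chi(F(v))\text{ with } u=gv\text{ ranging over nonsquares},
\]
so $K(g,cg)=\chi(c)+2\sum_{u\notin \mathrm{Sq},\,u\ne0}\chi(F(u))$. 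Adding the two expressions yields
\[
K(1,c)+K(g,cg)=2\chi(c)+2\Bigl(\sum_{u\in\Fp}\chi(F(u))-\chi(F(0))\Bigr).
\]
By Lemma~\ref{lem:quadratic-shift-sum}, $\sum_{u}\chi(F(u))=-1$, since $F$ is a monic quadratic with discriminant $(1-c)^2\ne0$. Also $\chi(F(0))=\chi(c)$. The $\chi(c)$ terms therefore cancel and the sum equals $-2$. Combining this with the square case gives $K(g,cg)=-2-K(1,c)=-2+1+t_c=-1+t_c$, as required.

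The two cases together give $K(A,cA)=-1-\chi(A)t_c$. This is consistent with the twist picture of Lemma~\ref{lem:quadratic-twist-trace}, but it does not depend on it.

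The main obstacle is bookkeeping in the nonsquare case. One must track how the fibre over $u=0$ is counted, and remember that the hypothesis $c\ne1$ is needed for both the squarefreeness in Proposition~\ref{prop:quartic-trace} and the nonzero discriminant in Lemma~\ref{lem:quadratic-shift-sum}. The theorem does not require $c$ to be a nonsquare, and the argument above never uses that property.
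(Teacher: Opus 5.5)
Your proof is correct, but in the nonsquare case it takes a different route from the paper.

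\textbf{Where the two proofs agree.} Both reduce to Proposition~\ref{prop:quartic-trace}, and both handle square $A$ by the same rescaling $z=sw$ (the paper phrases it as the isomorphism $X=sX'$, $Y=s^2Y'$).

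\textbf{The paper's nonsquare case.} The paper argues geometrically. The rescaling with $s=\sqrt A$ is defined only over $\mathbb F_{\ell^2}$, and the resulting descent cocycle is $(X,Y)\mapsto(-X,Y)$. Taking a rational point at infinity as origin, this is the elliptic involution $[-1]$. Hence $\mathcal E_{A,cA}$ is the quadratic twist of $\mathcal E_c$, and Lemma~\ref{lem:quadratic-twist-trace} flips the sign of the trace.

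\textbf{Your nonsquare case.} You prove the sign flip directly through the identity $K(1,c)+K(g,cg)=-2$. Equivalently, $\#\mathcal E_c(\Fp)+\#\mathcal E_{g,cg}(\Fp)=2\ell+2$. You obtain it by sorting the nonzero values of $z^2$ into squares and nonsquares and applying Lemma~\ref{lem:quadratic-shift-sum} to $F(u)=(u-1)(u-c)$.

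\textbf{What each route buys.} Your version is elementary and self-contained. It needs neither the identification of the descent cocycle with $[-1]$ nor the general twist lemma. It also makes visible that $c$ need not be a nonsquare. The paper's version explains \emph{why} only two values occur: the family is one curve and its twist. That is the structural point the paper builds on. It also gives the stronger fact that the Frobenius eigenvalues are negated, not just the trace over $\Fp$.

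\textbf{One small slip.} In your intermediate display for $K(g,cg)$ the summand should be $\chi(F(gv))$, not $\chi(F(v))$. Equivalently it is $\chi(F(z^2/g))$, since $(z^2-g)(z^2-cg)=g^2F(z^2/g)$. Your next formula, $K(g,cg)=\chi(c)+2\sum_{u\notin\mathrm{Sq},\,u\ne0}\chi(F(u))$, is correct, so the conclusion stands.
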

	
	\begin{proof}
		Let
		\[
		\mathcal E_{A,cA}:\quad Y^2=(X^2-A)(X^2-cA).
		\]
		By Proposition~\ref{prop:quartic-trace},
		\[
		\#\mathcal E_{A,cA}(\Fp)=\ell+K(A,cA)+2.
		\]
		Therefore its Frobenius trace is
		\[
		t(A)=\ell+1-\#\mathcal E_{A,cA}(\Fp)=-K(A,cA)-1.        \tag{1}
		\]
		If $A=s^2$ is a square, the change of variables $X=sX'$ and $Y=s^2Y'$ is
		defined over $\Fp$ and gives an isomorphism
		$\mathcal E_{A,cA}\simeq\mathcal E_c$.  Thus $t(A)=t_c$.
		
		If $A$ is a nonsquare, the same change of variables is defined over the
		quadratic extension $\Fp(\sqrt A)$ and identifies $\mathcal E_{A,cA}$ with
		$\mathcal E_c$ over that extension.  The nontrivial Galois automorphism sends
		$\sqrt A$ to $-\sqrt A$, so on $\mathcal E_c$ the descent cocycle is the
		involution $(X,Y)\mapsto(-X,Y)$.  Taking either point at infinity as the
		origin, this involution is the elliptic involution $[-1]$.  Hence this descent
		is the usual quadratic twist by the square class of $A$.  Thus, if $A$ is
		nonsquare, the Frobenius trace changes sign by
		Lemma~\ref{lem:quadratic-twist-trace}.  Hence $t(A)=\chi(A)t_c$ for every
		$A\in\Fps$.  Combining this with (1) gives
		$K(A,cA)=-1-\chi(A)t_c$.
	\end{proof}
	
	The primitive-root two-target size can now be evaluated without averaging over
	$N$; only the square class of $A=4\gamma N$ remains.
	
	\begin{theorem}
		\label{thm:primitive-root-two-values}
		Let $\gamma$ be a primitive root modulo $\ell$, set $A=4\gamma N$, and let
		$t_\gamma$ be the Frobenius trace of
		\[
		\mathcal E_\gamma:\quad Y^2=(X^2-1)(X^2-\gamma).
		\]
		Then
		\[
		|U_{\gamma,\gamma^2}(N;\ell)|
		=
		\begin{cases}
			\displaystyle
			\frac{3\ell+1+t_\gamma-2\chi(1-\gamma)}{4},
			& A\text{ a square},\\[1.2em]
			\displaystyle
			\frac{3\ell+1-t_\gamma+2\chi(\gamma-1)}{4},
			& A\text{ a nonsquare}.
		\end{cases}
		\]
		Consequently, for fixed $\ell$ and $\gamma$, the local primitive-root
		two-target sieve has an exactly two-valued size distribution as $N$ ranges over
		$\Fps$.
	\end{theorem}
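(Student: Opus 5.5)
The plan is to combine Corollary~\ref{cor:primitive-root-formula} with Theorem~\ref{thm:two-valued-K}, taking $c=\gamma$.

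First, with $A=4\gamma N$ and $B=4\gamma^2N=\gamma A$, the ratio is $B/A=\gamma$. Since $\gamma\ne1$ and $\gamma$ is a nonsquare, Theorem~\ref{thm:two-valued-K} applies and gives
\[
K(A,B)=-1-\chi(A)t_\gamma .
\]
Substituting this into Corollary~\ref{cor:primitive-root-formula} yields
\[
|U_{\gamma,\gamma^2}|=\frac{3\ell+1+\chi(A)t_\gamma-E(A,B)}{4}.
\]
It remains to evaluate $E(A,B)=r_B\chi(B-A)+r_A\chi(A-B)$ in each square class of $A$.

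If $A$ is a square, then $r_A=2$. Because $\chi(B)=\chi(\gamma)\chi(A)=-1$, we get $r_B=0$. Hence
\[
E=2\chi(A-B)=2\chi(A)\chi(1-\gamma)=2\chi(1-\gamma).
\]
This gives the first displayed value, $(3\ell+1+t_\gamma-2\chi(1-\gamma))/4$.

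If $A$ is a nonsquare, then $r_A=0$ and $r_B=2$, since $\chi(B)=\chi(\gamma)\chi(A)=1$. Hence
\[
E=2\chi(B-A)=2\chi(A)\chi(\gamma-1)=-2\chi(\gamma-1).
\]
With $\chi(A)t_\gamma=-t_\gamma$, the formula becomes $(3\ell+1-t_\gamma+2\chi(\gamma-1))/4$, which is the second value.

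The two-valuedness follows because $t_\gamma$ and $\gamma$ are fixed once $\ell$ and $\gamma$ are fixed. The only dependence on $N$ is through $\chi(A)=\chi(\gamma)\chi(N)=-\chi(N)$, and this takes both values $\pm1$ as $N$ ranges over $\Fps$.

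There is no real obstacle. The only care needed is the sign bookkeeping in $E$: one must use $\chi(A-B)=\chi(A)\chi(1-\gamma)$ and $\chi(B-A)=\chi(A)\chi(\gamma-1)$, and note that $1-\gamma\ne0$.
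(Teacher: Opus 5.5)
Your proposal is correct and follows the paper's proof essentially step for step. You apply Theorem~\ref{thm:two-valued-K} with $c=\gamma$ to get $K(A,\gamma A)=-1-\chi(A)t_\gamma$, evaluate $E(A,\gamma A)$ separately in the two square classes of $A$, and substitute into Corollary~\ref{cor:primitive-root-formula}, with the same sign bookkeeping as the paper; your observation that $\chi(A)=-\chi(N)$ takes both signs spells out the two-valuedness claim that the paper leaves implicit.
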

	
	\begin{proof}
		The primitive-root pair has ratio $c=\gamma$, so Theorem~\ref{thm:two-valued-K}
		gives
		\[
		K(A,\gamma A)=-1-\chi(A)t_\gamma .
		\]
		It remains only to evaluate the boundary term
		\[
		E(A,\gamma A)=r_{\gamma A}\chi(\gamma A-A)+r_A\chi(A-\gamma A).
		\]
		Since $\gamma$ is a nonsquare, exactly one of $A$ and $\gamma A$ is a square.
		If $A$ is a square, then $r_A=2$ and $r_{\gamma A}=0$, so
		\[
		E(A,\gamma A)=2\chi(A(1-\gamma))=2\chi(1-\gamma).
		\]
		Substituting $K(A,\gamma A)=-1-t_\gamma$ in
		Corollary~\ref{cor:primitive-root-formula} gives the first displayed value.
		If $A$ is a nonsquare, then $r_A=0$ and $r_{\gamma A}=2$, so
		\[
		E(A,\gamma A)=2\chi(A(\gamma-1))=-2\chi(\gamma-1),
		\]
		and $K(A,\gamma A)=-1+t_\gamma$.  Substitution gives the second displayed
		value.
	\end{proof}
	
	\begin{corollary}
		\label{cor:exact-trace-criterion}
		Let $\gamma$ be a primitive root modulo $\ell$, and let $t_\gamma$ be the
		Frobenius trace of
		\[
		Y^2=(X^2-1)(X^2-\gamma).
		\]
		Then
		\[
		|U_{\gamma,\gamma^2}(N;\ell)|\le \frac{3\ell}{4}+1
		\quad\text{for every }N\in\Fps
		\]
		if and only if
		\[
		-3+2\chi(\gamma-1)
		\le
		t_\gamma
		\le
		3+2\chi(1-\gamma).
		\]
	\end{corollary}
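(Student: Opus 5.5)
The plan is to read the criterion directly off the two-valued formula in Theorem~\ref{thm:primitive-root-two-values}. No further character-sum or elliptic-curve input should be needed, since all the arithmetic is already packaged in $t_\gamma$.

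The first step is to check that both cases of Theorem~\ref{thm:primitive-root-two-values} actually occur as $N$ ranges over $\Fps$. The map $N\mapsto A=4\gamma N$ is a bijection of $\Fps$, because $4\gamma\in\Fps$ for odd $\ell$. Since $\ell>2$, the group $\Fps$ contains both squares (for instance $1$) and nonsquares (for instance $\gamma$). Hence there are values of $N$ with $A$ a square and values with $A$ a nonsquare. It follows that the statement ``$|U_{\gamma,\gamma^2}(N;\ell)|\le 3\ell/4+1$ for every $N$'' is equivalent to the conjunction of two inequalities, one for each of the two displayed values. We also record that $\chi(1-\gamma)$ and $\chi(\gamma-1)$ lie in $\{\pm1\}$, since $\gamma\ne1$: a primitive root has order $\ell-1\ge2$.

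The second step handles the square class. There the bound becomes
\[
\frac{3\ell+1+t_\gamma-2\chi(1-\gamma)}{4}\le\frac{3\ell}{4}+1 .
\]
Multiplying by $4$ and cancelling $3\ell$, this is equivalent to $1+t_\gamma-2\chi(1-\gamma)\le4$, that is, $t_\gamma\le 3+2\chi(1-\gamma)$.

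The third step handles the nonsquare class. There the bound becomes
\[
\frac{3\ell+1-t_\gamma+2\chi(\gamma-1)}{4}\le\frac{3\ell}{4}+1 ,
\]
which is equivalent to $1-t_\gamma+2\chi(\gamma-1)\le 4$, that is, $t_\gamma\ge -3+2\chi(\gamma-1)$.

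Combining the two inequalities gives exactly the stated trace interval, in both directions of the equivalence. The only point that requires any care is the surjectivity observation in the first step. Without it, only the ``if'' direction would follow, since a square class not attained by $A$ would impose no constraint on $t_\gamma$. Everything else is a rearrangement of linear inequalities, where all quantities are integers.
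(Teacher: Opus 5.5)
Your proposal is correct and follows the paper's proof: both split into the two square classes of $A=4\gamma N$, substitute the two values from Theorem~\ref{thm:primitive-root-two-values}, and rearrange each into one half of the trace interval. Your explicit check that $N\mapsto 4\gamma N$ reaches both square classes is left implicit in the paper, and it is exactly what the ``only if'' direction needs.
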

	
	\begin{proof}
		By Theorem~\ref{thm:primitive-root-two-values}, the desired inequality must
		hold for the two square classes of $A=4\gamma N$.
		For square $A$ it is equivalent to
		\[
		\frac{3\ell+1+t_\gamma-2\chi(1-\gamma)}4\le \frac{3\ell}4+1,
		\]
		that is, $t_\gamma\le 3+2\chi(1-\gamma)$.  For nonsquare $A$ it is equivalent
		to
		\[
		\frac{3\ell+1-t_\gamma+2\chi(\gamma-1)}4\le \frac{3\ell}4+1,
		\]
		that is, $t_\gamma\ge -3+2\chi(\gamma-1)$.  Combining the two inequalities
		proves the criterion.
	\end{proof}
	
	\begin{remark}
		\label{rem:trace-verify}
		The trace criterion makes the two following counterexamples transparent, and the
		SageMath verification code of Appendix~\ref{app:sage} confirms that the failure
		comes from the trace obstruction rather than from a failure of the two-valued
		formula.
		
		For $\ell=5$ and $\gamma=2$, we have
		$\chi(\gamma-1)=\chi(1)=1$ and $\chi(1-\gamma)=\chi(4)=1$, so
		Corollary~\textup{\ref{cor:exact-trace-criterion}} gives the admissible
		interval $[-1,5]$.  The curve
		\[
		\mathcal{E}_2:\quad Y^2=(X^2-1)(X^2-2)
		\]
		over $\mathbb{F}_5$ has Frobenius trace $t_\gamma=-2$, which lies outside
		this interval.  Hence the pointwise bound
		$|U_{\gamma,\gamma^2}(N;\ell)|\le 3\ell/4+1$ cannot hold for all
		$N\in\mathbb{F}_5^*$.
		
		For $\ell=13$ and $\gamma=7$, we have
		$\chi(\gamma-1)=\chi(6)=-1$ and
		$\chi(1-\gamma)=\chi(7)=-1$, so the admissible interval is $[-5,1]$.
		The curve
		\[
		\mathcal{E}_7:\quad Y^2=(X^2-1)(X^2-7)
		\]
		over $\mathbb{F}_{13}$ has Frobenius trace $t_\gamma=-6$, again outside
		the interval.  Thus the pointwise bound fails for some
		$N\in\mathbb{F}_{13}^*$.
		
		The computation returns
		\[
		\begin{array}{c|c|c|c}
			\ell & \gamma & t_\gamma & \text{admissible interval}\\
			\hline
			5 & 2 & -2 & [-1,5]\\
			13 & 7 & -6 & [-5,1]
		\end{array}
		\]
		and verifies the two-valued formula of
		Theorem~\textup{\ref{thm:primitive-root-two-values}} for every
		$N\in\mathbb{F}_\ell^*$ in both cases.
	\end{remark}
	
	\subsection{Bounds on the union and intersection}
	
	We next record the uniform bound that follows from the exact formula and the
	Hasse--Weil estimate.
	
	\begin{corollary}\label{thm:corrected-bound}
		For any prime $\ell > 2$, $N \in \Fps$, and distinct $a, b \in \Fps$,
		\[
		\left||U_{a,b}(N;\ell)| - \frac{3\ell}{4}\right| \le \frac{2\sqrt\ell + 7}{4}.
		\]
	\end{corollary}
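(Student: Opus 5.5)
The approach is to combine the exact formula of Theorem~\ref{thm:exact-union} with the Hasse--Weil estimate of Corollary~\ref{cor:hasse-weil}, and then bound the remaining lower-order terms crudely. From Theorem~\ref{thm:exact-union},
\[
|U_{a,b}| = \ell - |\Fp\setminus U_{a,b}|
= \frac{3\ell - 2 + r_A + r_B - K(A,B) - r_B\chi(B-A) - r_A\chi(A-B)}{4}.
\]
Write $K(A,B) = (K(A,B)+1) - 1$. Then
\[
4|U_{a,b}| - 3\ell = -(K(A,B)+1) + \bigl(-1 + r_A + r_B - r_B\chi(B-A) - r_A\chi(A-B)\bigr).
\]

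The first piece is controlled by Corollary~\ref{cor:hasse-weil}. That corollary applies because $A=4aN$ and $B=4bN$ are nonzero and distinct (as $a\ne b$ and $4N\ne0$ in $\Fp$), and it gives $|K(A,B)+1|\le 2\sqrt\ell$.

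For the second piece, I would use only $r_A,r_B\in\{0,2\}$ and $\chi(\pm(A-B))\in\{\pm1\}$. The term $r_A - r_A\chi(A-B)$ lies in $[0,4]$, and similarly for $B$. So the bracket lies in $[-1,7]$, and its absolute value is at most $7$. The triangle inequality then gives $|4|U_{a,b}|-3\ell|\le 2\sqrt\ell+7$, and dividing by $4$ yields the claim.

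The argument has no real obstacle beyond keeping the signs straight. The only point needing care is to absorb the shift $K+1$ into the Hasse--Weil term, so that the constant is $7$ rather than $8$. One could also observe that $\chi(B-A)=\chi(-1)\chi(A-B)$, which would give a sharper constant, but this is not needed for the stated bound.
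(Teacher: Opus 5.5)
Your argument is correct and is essentially the paper's proof: the exact formula of Theorem~\ref{thm:exact-union}, the Hasse--Weil bound via Corollary~\ref{cor:hasse-weil}, and a crude bound on the boundary terms. The only difference is bookkeeping. The paper bounds $|K(A,B)|\le 2\sqrt\ell+1$ and bounds the boundary expression by $6$, so absorbing the shift into $K+1$ is not what keeps the constant at $7$. Also, the $\chi(-1)$ refinement you mention sharpens the constant only when $\ell\equiv 3\pmod 4$; for $\ell=5$, $A=1$, $B=4$ your bracket equals $7$.
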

	
	\begin{proof}
		From Theorem~\ref{thm:exact-union},
		\[
		4|\Fp\setminus U_{a,b}| = \ell + 2 - r_A - r_B + K(A,B) + r_B\chi(B-A) + r_A\chi(A-B).
		\]
		The boundary terms satisfy
		$|2-r_A-r_B+r_B\chi(B-A)+r_A\chi(A-B)|\le6$: indeed
		$|2-r_A-r_B|\le2$ because $r_A,r_B\in\{0,2\}$, while
		$|r_B\chi(B-A)+r_A\chi(A-B)|\le r_A+r_B\le4$.  By
		Corollary~\ref{cor:hasse-weil},
		$|K(A,B)| \le 2\sqrt\ell + 1$.  Hence
		$|4|\Fp\setminus U_{a,b}| - \ell| \le 2\sqrt\ell + 7$, giving the result.
	\end{proof}
	
	For the primitive-root pair, the two one-target sizes add to exactly $\ell$,
	so the union estimate immediately gives the corresponding intersection
	estimate.
	
	\begin{corollary}\label{cor:intersection}
		Let $\gamma$ be a primitive root modulo $\ell$ and $N\in\Fps$.  Then
		\[
		\left||H_\gamma(N;\ell) \cap H_{\gamma^2}(N;\ell)| - \frac{\ell}{4}\right|
		\le \frac{2\sqrt\ell+7}{4}.
		\]
	\end{corollary}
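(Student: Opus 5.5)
The plan is to use inclusion--exclusion for two sets, $|H_\gamma\cap H_{\gamma^2}| = |H_\gamma|+|H_{\gamma^2}|-|U_{\gamma,\gamma^2}|$. With this identity the intersection bound follows directly from the union bound in Corollary~\ref{thm:corrected-bound}.

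First, I will evaluate the two one-target sizes using Lemma~\ref{lem:one-target-size}. We have $|H_\gamma(N;\ell)|=(\ell+\chi(\gamma N))/2$ and $|H_{\gamma^2}(N;\ell)|=(\ell+\chi(\gamma^2N))/2$. Since $\gamma$ is a primitive root, it is a nonsquare, so $\chi(\gamma N)=-\chi(N)$ and $\chi(\gamma^2N)=\chi(N)$. The two $\pm$ terms therefore cancel, and the sizes sum to exactly $\ell$.

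Second, I will combine this with inclusion--exclusion:
\[
|H_\gamma\cap H_{\gamma^2}|-\frac{\ell}{4} = \ell-|U_{\gamma,\gamma^2}|-\frac{\ell}{4} = -\Bigl(|U_{\gamma,\gamma^2}|-\frac{3\ell}{4}\Bigr).
\]
Since $\gamma\ne\gamma^2$ (because $\gamma\ne1$ for $\ell>2$), Corollary~\ref{thm:corrected-bound} applies with $a=\gamma$ and $b=\gamma^2$. Taking absolute values then gives the claimed bound $(2\sqrt\ell+7)/4$.

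There is no real obstacle here. The only points to check are that $\gamma$ and $\gamma^2$ are distinct, and that the square-class cancellation holds for every $N\in\Fps$, not merely on average.
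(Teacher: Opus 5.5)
Your proposal is correct and matches the paper's proof: the one-target sizes $(\ell\mp\chi(N))/2$ sum to exactly $\ell$, so inclusion--exclusion gives $|H_\gamma\cap H_{\gamma^2}|=\ell-|U_{\gamma,\gamma^2}|$, and Corollary~\ref{thm:corrected-bound} then gives the bound. You also check explicitly that $\gamma\ne\gamma^2$, which the corollary requires and the paper leaves implicit.
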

	
	\begin{proof}
		By Lemma~\ref{lem:one-target-size}, $\chi(\gamma N) = -\chi(N)$ (since
		$\chi(\gamma)=-1$) and $\chi(\gamma^2 N) = \chi(N)$, so
		$|H_\gamma|+|H_{\gamma^2}| = (\ell-\chi(N))/2 + (\ell+\chi(N))/2 = \ell$.
		Inclusion-exclusion gives $|H_\gamma\cap H_{\gamma^2}| = \ell - |U_{\gamma,\gamma^2}|$,
		and the bound follows from Corollary~\ref{thm:corrected-bound}.
	\end{proof}
	
	\subsection{Failure of the pointwise bound}
	
	The exact primitive-root formula also identifies the precise obstruction to
	the original pointwise inequality.
	
	\begin{corollary}\label{cor:sufficient-condition}
		For fixed $N$, the bound $|U_{\gamma,\gamma^2}(N;\ell)| \le 3\ell/4 + 1$
		holds if and only if $K(A,B) + E(A,B) \ge -4$.  Uniformly in $N$, the same
		bound is equivalent to the trace interval of
		Corollary~\textup{\ref{cor:exact-trace-criterion}}.  Thus the primitive-root two-target
		problem is a trace-selection problem for the single curve
		$Y^2=(X^2-1)(X^2-\gamma)$, not a separate problem for each $N$.
	\end{corollary}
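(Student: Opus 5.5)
The statement has three parts: a pointwise equivalence for fixed $N$, a uniform equivalence, and the interpretive conclusion. Each follows by rearranging earlier results, so the plan is to reduce each part to an inequality on $K(A,B)+E(A,B)$ or on $t_\gamma$.

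\emph{Fixed $N$.} I would start from Corollary~\ref{cor:primitive-root-formula}, which gives, with $A=4\gamma N$ and $B=4\gamma^2N$,
\[
|U_{\gamma,\gamma^2}(N;\ell)|=\frac{3\ell-K(A,B)-E(A,B)}{4}.
\]
Multiplying the target inequality $|U_{\gamma,\gamma^2}|\le 3\ell/4+1$ by $4$ turns it into $3\ell-K(A,B)-E(A,B)\le 3\ell+4$. This is exactly $K(A,B)+E(A,B)\ge-4$. Every step is reversible, so the equivalence holds for each fixed $N$.

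\emph{Uniformly in $N$.} The bound for all $N\in\Fps$ is equivalent to the bound holding for every value that $|U_{\gamma,\gamma^2}(N;\ell)|$ attains. By Theorem~\ref{thm:primitive-root-two-values}, this quantity takes one value when $A$ is a square and another when $A$ is a nonsquare.

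As $N$ ranges over $\Fps$, $A=4\gamma N$ also ranges over all of $\Fps$, so both square classes actually occur. This is the one point needing care: it is what makes the uniform statement an ``if and only if'' rather than only a sufficient condition. With both classes realized, Corollary~\ref{cor:exact-trace-criterion} already shows that requiring the bound in both classes is equivalent to
\[
-3+2\chi(\gamma-1)\le t_\gamma\le 3+2\chi(1-\gamma).
\]
As a cross-check, substituting $K(A,\gamma A)=-1-\chi(A)t_\gamma$ from Theorem~\ref{thm:two-valued-K}, together with the evaluation of $E$ in each class, into $K+E\ge-4$ reproduces the same two inequalities.

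\emph{Interpretation.} The trace interval involves only $\ell$, $\gamma$ and $t_\gamma$, the trace of the single curve $Y^2=(X^2-1)(X^2-\gamma)$. Its truth therefore does not depend on $N$, which gives the concluding trace-selection statement.

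I expect no real obstacle. The only subtle point is the surjectivity of $N\mapsto A$ onto both square classes, which is immediate from $4\gamma\ne0$.
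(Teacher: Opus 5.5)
Your proposal is correct and matches the paper's argument: the fixed-$N$ equivalence is the same rearrangement of Corollary~\ref{cor:primitive-root-formula}, and the uniform statement rests, as in the paper, on Theorem~\ref{thm:primitive-root-two-values} and Corollary~\ref{cor:exact-trace-criterion}. Your remark that $N\mapsto 4\gamma N$ reaches both square classes is left implicit in the paper, and it is exactly what makes the uniform statement an equivalence rather than only a sufficient condition.
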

	
	\begin{proof}
		From Corollary~\ref{cor:primitive-root-formula}, $|U_{\gamma,\gamma^2}| \le
		3\ell/4+1$ iff $3\ell - K - E \le 3\ell+4$ iff $K+E \ge -4$.
	\end{proof}
	
	The obstruction is not vacuous: it already appears in the smallest nontrivial
	primitive-root example.
	
	\begin{proposition}\label{prop:counterexample-five}
		The bound $|U_{\gamma,\gamma^2}| \le 3\ell/4+1$ fails for $\ell=5$, $N=1$,
		$\gamma=2$.
	\end{proposition}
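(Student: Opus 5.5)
The most direct route is to compute both sieve sets explicitly over $\mathbb F_5$ and show that their union is all of $\mathbb F_5$. Since $3\ell/4+1=19/4$ is not an integer, the bound fails exactly when $|U_{2,4}(1;5)|=5$.

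First I will confirm that $\gamma=2$ is a primitive root modulo $5$: its powers are $2,4,3,1$. Next I will list $H_2(1;5)=\{2x+x^{-1}:x\in\mathbb F_5^*\}$. The inverses modulo $5$ are $1^{-1}=1$, $2^{-1}=3$, $3^{-1}=2$, $4^{-1}=4$. So the four values are $2+1=3$, $4+3=2$, $6+2=3$ and $8+4=2$ modulo $5$, giving $H_2(1;5)=\{2,3\}$. This agrees with Lemma~\ref{lem:one-target-size}, since $\chi(2)=-1$ gives size $(5-1)/2=2$.

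In the same way, $H_4(1;5)=\{4x+x^{-1}\}$ takes the values $5,11,14,20$, which reduce to $0,1,4,0$. Hence $H_4(1;5)=\{0,1,4\}$, of size $(5+1)/2=3$, again consistent with Lemma~\ref{lem:one-target-size}. The two sets are disjoint and together cover $\mathbb F_5$. So $|U_{2,4}(1;5)|=5>19/4$, and the bound fails.

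As an independent check I will use Theorem~\ref{thm:primitive-root-two-values}. Here $A=4\gamma N=8\equiv 3$, which is a nonsquare modulo $5$. By Remark~\ref{rem:trace-verify}, $t_\gamma=-2$, and $\chi(\gamma-1)=\chi(1)=1$. The nonsquare branch then gives $(15+1+2+2)/4=5$, matching the direct count. There is no real obstacle here. The only care needed is the arithmetic modulo $5$ and the observation that failure of the bound forces the union to have size exactly $\ell$.
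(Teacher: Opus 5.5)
Your proposal is correct and follows the paper's proof: you check that $2$ is a primitive root mod $5$, compute $H_2(1;5)=\{2,3\}$ and $H_4(1;5)=\{0,1,4\}$ directly, and note that their union is all of $\mathbb F_5$, so its size $5$ exceeds $19/4$. Your extra cross-check via the nonsquare branch of Theorem~\ref{thm:primitive-root-two-values} (with $A\equiv 3$ and $t_\gamma=-2$) is correct but not needed, and it mirrors what the paper records in Remark~\ref{rem:trace-verify}.
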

	
	\begin{proof}
		The primitive root modulo $5$ is $\gamma=2$ (since $2^1=2,2^2=4,2^3=3,2^4=1$
		generates all of $\mathbb{F}_5^*$).  Computing $H_2(1;5)$: for
		$x=1,2,3,4$ we get $2\cdot1+1^{-1}=3$, $2\cdot2+2^{-1}=4+3=2$,
		$2\cdot3+3^{-1}=6+2=3$, $2\cdot4+4^{-1}=8+4=2$ (all mod 5).
		So $H_2(1;5) = \{2,3\}$.  For $H_4(1;5)$ ($b=\gamma^2=4$): $x=1$ gives
		$4+1=0$, $x=2$ gives $8+3=1$, $x=3$ gives $12+2=4$, $x=4$ gives $16+4=0$
		(mod 5), so $H_4(1;5) = \{0,1,4\}$.  The union is $\{0,1,2,3,4\} = \mathbb{F}_5$,
		of size $5$.  But $3\ell/4+1 = 15/4+1 = 19/4 = 4.75 < 5$.
	\end{proof}
	
	\begin{remark}\label{rem:more-counterexamples}
		The failure is not confined to $\ell=5$.  For $\ell=13$, $N=1$, $\gamma=7$
		(one verifies $7$ has order $12$ modulo $13$, so it is a primitive root),
		one computes $|H_7(1;13)\cup H_{49\bmod 13}(1;13)| = |H_7(1;13)\cup
		H_{10}(1;13)| = 11 > 3\cdot13/4+1 = 10.75$.  These examples correspond to
		elliptic curves $\mathcal{E}_{A,B}$ whose Frobenius trace satisfies
		$K(A,B) + E(A,B) < -4$.
	\end{remark}
	
	\subsection{\texorpdfstring{First moment of $K$ over $N$}{First moment of K over N}}
	
	\begin{lemma}\label{lem:average-K}
		Let $c \in \Fps$ be a quadratic nonresidue.  Then
		\[
		\frac{1}{\ell-1}\sum_{A \in \Fps} K(A,cA) = -1.
		\]
		Equivalently, for every fixed $k\in\Fps$,
		\[
		\frac{1}{\ell-1}\sum_{N \in \Fps}K(kN,ckN)=-1.
		\]
	\end{lemma}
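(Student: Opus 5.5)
The plan is to use Theorem~\ref{thm:two-valued-K} directly. For each $A\in\Fps$ it gives $K(A,cA)=-1-\chi(A)t_c$, where $t_c$ is the Frobenius trace of $\mathcal E_c$ and does not depend on $A$. Summing over $A\in\Fps$ gives
\[
\sum_{A\in\Fps}K(A,cA)=-(\ell-1)-t_c\sum_{A\in\Fps}\chi(A).
\]
The remaining character sum vanishes, since $\Fps$ contains exactly $(\ell-1)/2$ squares and $(\ell-1)/2$ nonsquares. Dividing by $\ell-1$ yields $-1$. The hypothesis that $c$ is a nonsquare in particular gives $c\ne1$, which is all Theorem~\ref{thm:two-valued-K} needs; the value of $t_c$ never enters the computation.

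For the equivalent formulation, I will fix $k\in\Fps$. The map $N\mapsto kN$ is a bijection of $\Fps$, so $\sum_{N}K(kN,ckN)=\sum_{A}K(A,cA)$, and the same average follows.

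As a sanity check independent of the elliptic-curve input, I would also note a direct route. Swap the sums to write $\sum_A K(A,cA)=\sum_z\sum_{A\ne0}\chi((z^2-A)(z^2-cA))$. For $z=0$ the inner sum is $\sum_A\chi(cA^2)=-(\ell-1)$. For $z\ne0$, the inner sum over all $A\in\Fp$ is a quadratic character sum in $A$ with leading coefficient $c$ and discriminant $z^4(1-c)^2\ne0$. By Lemma~\ref{lem:quadratic-shift-sum} it equals $-\chi(c)=1$, and removing the $A=0$ term $\chi(z^4)=1$ leaves $0$. This again totals $-(\ell-1)$.

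There is no real obstacle here. The only points needing care are the vanishing of $\sum_A\chi(A)$ in the first route and the exclusion of the $A=0$ term in the second.
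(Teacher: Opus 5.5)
Your argument is correct, but your main route is not the paper's. You get the first moment in one line from Theorem~\ref{thm:two-valued-K}. That is legitimate: the theorem is proved earlier by the quadratic-twist argument and does not use the present lemma.

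The paper instead proves the lemma by exactly the direct computation you give as a sanity check. It swaps the sums and gets $-(\ell-1)$ from $z=0$. For $z\ne0$ it substitutes $A=z^2t$, so the inner sum becomes $\sum_{t\in\Fps}\chi((1-t)(1-ct))=1-1=0$ by Lemma~\ref{lem:quadratic-shift-sum}. Your version treats the inner sum directly as a quadratic in $A$ with discriminant $z^4(1-c)^2$; it is the same computation without the normalization.

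Each route buys something different:
\begin{itemize}
\item Your trace route is shorter and makes plain that only $c\ne1$ matters. The mean is $-1$ for square $c\ne1$ as well. The direct route confirms this with different bookkeeping: $+(\ell-1)$ from $z=0$ and $-2$ from each $z\ne0$.
\item The paper's route is fully elementary. It uses only the quadratic character-sum identity, not the descent/twist identification. So the value $\mathbb E(K)=-1$ used in Corollary~\ref{cor:concentration} is established independently of Theorem~\ref{thm:two-valued-K} and serves as a consistency check on it.
\end{itemize}
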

	
	\begin{proof}
		We compute
		\[
		\sum_{A\in\Fps}K(A,cA)
		=\sum_{z\in\Fp}\sum_{A\in\Fps}
		\chi\bigl((z^2-A)(z^2-cA)\bigr).
		\]
		For $z=0$, the inner sum is
		\[
		\sum_{A\in\Fps}\chi(A\cdot cA)
		=\sum_{A\in\Fps}\chi(cA^2)=(\ell-1)\chi(c)=-(\ell-1).
		\]
		For $z\ne0$, substitute $A=z^2t$.  Since $z^2$ is a square,
		\[
		\sum_{A\in\Fps}\chi((z^2-A)(z^2-cA))
		=\sum_{t\in\Fps}\chi((1-t)(1-ct)).
		\]
		Extending the last sum to all $t\in\Fp$ adds the $t=0$ contribution $1$.
		The polynomial $(1-t)(1-ct)=ct^2-(1+c)t+1$ has nonzero discriminant
		$(1-c)^2$, because $c\ne1$.  Lemma~\ref{lem:quadratic-shift-sum} applied to the quadratic polynomial gives $\sum_{t\in\Fp}\chi(ct^2-(1+c)t+1)=-\chi(c)=1$, since $c$ is a nonsquare.  Therefore the sum over $t\in\Fps$ is $1-1=0$.
		Thus only $z=0$ contributes, and the total is $-(\ell-1)$.  The
		$N$-form follows because multiplication by a fixed $k\in\Fps$ permutes
		$\Fps$.
	\end{proof}
	
	\begin{lemma}\label{lem:average-E}
		Let $c\in\Fps$ be a nonsquare and write
		\[
		E(A,cA)=r_{cA}\chi(cA-A)+r_A\chi(A-cA),
		\qquad r_T=1+\chi(T).
		\]
		Then
		\[
		\frac{1}{\ell-1}\sum_{A\in\Fps}E(A,cA)
		= (\chi(-1)-1)\chi(c-1).
		\]
	\end{lemma}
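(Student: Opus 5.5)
Because $c$ is a nonsquare, we split $\Fps$ into the square class and the nonsquare class of $A$. On each class exactly one of $r_A,r_{cA}$ equals $2$ and the other is $0$, so $E(A,cA)$ reduces to a single Legendre value. We compute that value on each class and then average.

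\emph{Case $A$ a square.} Here $r_A=2$ and $r_{cA}=1+\chi(c)\chi(A)=0$. Hence
\[
E(A,cA)=2\chi(A-cA)=2\chi(A)\chi(1-c)=2\chi(1-c).
\]
Using multiplicativity, $\chi(1-c)=\chi(-1)\chi(c-1)$, so $E(A,cA)=2\chi(-1)\chi(c-1)$. This value does not depend on $A$ within the class. (If $c=1$ the value would be $0$, but $c$ is a nonsquare, so $c\ne1$ and $\chi(c-1)\in\{\pm1\}$.)

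\emph{Case $A$ a nonsquare.} Now $r_A=0$ and $r_{cA}=2$. Hence
\[
E(A,cA)=2\chi(cA-A)=2\chi(A)\chi(c-1)=-2\chi(c-1).
\]
This is the same computation as the boundary-term evaluation in the proof of Theorem~\ref{thm:primitive-root-two-values}, with $\gamma$ replaced by a general nonsquare $c$.

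\emph{Averaging.} Each square class contains exactly $(\ell-1)/2$ elements of $\Fps$. Therefore
\[
\frac{1}{\ell-1}\sum_{A\in\Fps}E(A,cA)=\frac12\bigl(2\chi(-1)\chi(c-1)-2\chi(c-1)\bigr)=(\chi(-1)-1)\chi(c-1),
\]
which is the claimed formula.

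No genuine obstacle is expected. The only point needing care is the sign bookkeeping: writing $\chi(1-c)$ as $\chi(-1)\chi(c-1)$, so that both classes are expressed through the same quantity $\chi(c-1)$, and keeping track of which of $r_A,r_{cA}$ vanishes in each class.
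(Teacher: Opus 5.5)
Your proposal is correct and matches the paper's proof: you split $\Fps$ into squares and nonsquares, note that exactly one of $r_A, r_{cA}$ equals $2$, evaluate $E(A,cA)$ as $2\chi(1-c)$ and $-2\chi(c-1)$ on the two classes, and average over the $(\ell-1)/2$ elements of each class. The only cosmetic difference is that you rewrite $\chi(1-c)=\chi(-1)\chi(c-1)$ before averaging, whereas the paper does it after.
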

	
	\begin{proof}
		If $A$ is a square, then $r_A=2$ and $r_{cA}=0$, so
		$E(A,cA)=2\chi(A(1-c))=2\chi(1-c)$.  If $A$ is a nonsquare, then $r_A=0$ and
		$r_{cA}=2$, so $E(A,cA)=2\chi(A(c-1))=-2\chi(c-1)$.  There are
		$(\ell-1)/2$ squares and $(\ell-1)/2$ nonsquares in $\Fps$.  Averaging gives
		\[
		\chi(1-c)-\chi(c-1)=(\chi(-1)-1)\chi(c-1). \qedhere
		\]
	\end{proof}
	
	\begin{proposition}\label{thm:average-union}
		For any prime $\ell > 2$ and primitive root $\gamma$ modulo $\ell$,
		\[
		\frac{1}{\ell-1}\sum_{N\in\Fps}|U_{\gamma,\gamma^2}(N;\ell)|
		= \frac{3\ell}{4} + \frac{1 - (\chi(-1)-1)\chi(\gamma-1)}{4}
		\le \frac{3\ell}{4} + \frac{3}{4}.
		\]
	\end{proposition}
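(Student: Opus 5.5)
We start from the exact primitive-root formula of Corollary~\ref{cor:primitive-root-formula}, which gives, for every $N\in\Fps$,
\[
|U_{\gamma,\gamma^2}(N;\ell)|=\frac{3\ell-K(A,\gamma A)-E(A,\gamma A)}{4},\qquad A=4\gamma N,
\]
since $B=4\gamma^2N=\gamma A$. Averaging this identity over $N\in\Fps$ splits the problem into two separate averages: one of $K(A,\gamma A)$ and one of $E(A,\gamma A)$. As $N$ runs over $\Fps$, the parameter $A=4\gamma N$ runs over all of $\Fps$ exactly once, because multiplication by the fixed unit $4\gamma$ is a bijection. So both averages may be taken over $A\in\Fps$ with ratio $c=\gamma$, which is a nonsquare.

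For the first average, Lemma~\ref{lem:average-K} with $c=\gamma$ gives $\frac{1}{\ell-1}\sum_A K(A,\gamma A)=-1$. One could also derive this from Theorem~\ref{thm:two-valued-K}, since $\chi(A)$ averages to zero over $\Fps$. For the second average, Lemma~\ref{lem:average-E} with $c=\gamma$ gives $\frac{1}{\ell-1}\sum_A E(A,\gamma A)=(\chi(-1)-1)\chi(\gamma-1)$. Substituting both values yields
\[
\frac{1}{\ell-1}\sum_{N}|U_{\gamma,\gamma^2}(N;\ell)|=\frac{3\ell+1-(\chi(-1)-1)\chi(\gamma-1)}{4},
\]
which is the stated equality.

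For the inequality, observe that $\chi(-1)-1\in\{0,-2\}$ and $\chi(\gamma-1)\in\{-1,0,1\}$. Hence $-(\chi(-1)-1)\chi(\gamma-1)\le 2$, and the numerator correction $1-(\chi(-1)-1)\chi(\gamma-1)$ is at most $3$, which gives the bound $3\ell/4+3/4$. The only point needing care is the case $\gamma=1$, where $\chi(\gamma-1)=0$. This case cannot occur for $\ell>2$, since a primitive root then has order $\ell-1\ge 2$. In any event it would only make the correction smaller.

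There is no real obstacle here. The one step to double-check is the bookkeeping of the reparametrization $N\mapsto A$, together with the fact that the lemmas are stated for a general nonsquare $c$. Specializing them to $c=\gamma$ is legitimate because $\chi(\gamma)=-1$.
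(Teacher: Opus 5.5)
Your proposal is correct and follows the paper's own proof essentially step for step. You average the formula of Corollary~\ref{cor:primitive-root-formula} over $N$ with $A=4\gamma N$, insert Lemma~\ref{lem:average-K} and Lemma~\ref{lem:average-E} at $c=\gamma$, and bound the correction using $\chi(-1)-1\in\{0,-2\}$ and $\chi(\gamma-1)\in\{-1,0,1\}$. Your extra checks (the bijection $N\mapsto 4\gamma N$ and $\gamma\neq1$ for $\ell>2$) are accurate and only make explicit what the paper leaves implicit.
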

	
	\begin{proof}
		Apply Corollary~\ref{cor:primitive-root-formula} with
		$A=4\gamma N$ and $B=4\gamma^2N=cA$ with $c=\gamma$, a nonsquare.  Averaging
		\[
		|U_{\gamma,\gamma^2}(N;\ell)|=\frac{3\ell-K(A,cA)-E(A,cA)}4
		\]
		over $N\in\Fps$, Lemma~\ref{lem:average-K} gives average $K=-1$, while
		Lemma~\ref{lem:average-E} gives average $E=(\chi(-1)-1)\chi(\gamma-1)$.
		The asserted identity follows.  Since $\chi(-1)-1\in\{0,-2\}$ and
		$\chi(\gamma-1)\in\{-1,0,1\}$, the correction term is at most $2$ in
		absolute value, giving the displayed upper bound.
	\end{proof}
	
	\section{\texorpdfstring
		{Second moment of $K$ and concentration of union size}
		{Second moment of K and concentration of union size}}
	\label{sec:second-moment}
	
	The first moment gives the mean of $K(A,cA)$ as $A$ varies.  The
	two-valued trace distribution proved in Theorem~\ref{thm:two-valued-K} gives
	the exact second moment immediately.  The important point is that this moment
	is not a universal polynomial in $\ell$; it depends on the Frobenius trace of
	one fixed elliptic curve.

	\begin{theorem}\label{thm:second-moment}
		Let $c\in\Fps$ be a quadratic nonresidue, and let $t_c$ be the Frobenius trace
		of
		\[
		\mathcal E_c:\quad Y^2=(X^2-1)(X^2-c).
		\]
		Then
		\[
		\frac{1}{\ell-1}\sum_{A\in\Fps}K(A,cA)^2=t_c^2+1.
		\]
		Equivalently, for every fixed $k\in\Fps$,
		\[
		\frac{1}{\ell-1}\sum_{N\in\Fps}K(kN,ckN)^2=t_c^2+1.
		\]
	\end{theorem}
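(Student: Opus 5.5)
The plan is to use Theorem~\ref{thm:two-valued-K} directly, since it makes $K(A,cA)$ an explicit two-valued function of the square class of $A$. That theorem needs only $c\ne1$, which holds because $c$ is a nonsquare, and it gives $K(A,cA)=-1-\chi(A)t_c$ for every $A\in\Fps$. Squaring yields
\[
K(A,cA)^2 = 1 + 2\chi(A)t_c + \chi(A)^2 t_c^2 = 1 + t_c^2 + 2\chi(A)t_c,
\]
where we used $\chi(A)^2=1$ for $A\ne0$.

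Next we sum over $A\in\Fps$. The constant part contributes $(\ell-1)(1+t_c^2)$. The cross term contributes $2t_c\sum_{A\in\Fps}\chi(A)$, and this vanishes because $\Fps$ contains exactly $(\ell-1)/2$ squares and $(\ell-1)/2$ nonsquares. Dividing by $\ell-1$ gives $t_c^2+1$. As a consistency check, the same computation without squaring recovers Lemma~\ref{lem:average-K}.

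For the $N$-form, the map $N\mapsto kN$ is a permutation of $\Fps$ for fixed $k\in\Fps$, so the sum over $N$ of $K(kN,ckN)^2$ equals the sum over $A$ of $K(A,cA)^2$.

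There is no real obstacle here. All the arithmetic geometry, namely the identification of the family as one curve together with its quadratic twist, is already contained in Theorem~\ref{thm:two-valued-K}. The only point needing care is that the cross term cancels exactly, not merely approximately. This holds because $\chi$ is a nontrivial character on $\Fps$, so $\sum_{A\in\Fps}\chi(A)=0$.

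If we also want the variance remark from the introduction, it follows at once: the variance is $t_c^2+1-(-1)^2=t_c^2$, and Theorem~\ref{thm:hasse-weil-genus-one} bounds this by $4\ell$.
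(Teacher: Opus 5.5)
Your proof is correct and follows the paper's route: both arguments rest entirely on Theorem~\ref{thm:two-valued-K}. Your expansion of $K(A,cA)^2$, with the cross term $2t_c\sum_{A\in\Fps}\chi(A)=0$, is an algebraic rewriting of the paper's direct average $\frac12(-1-t_c)^2+\frac12(-1+t_c)^2$ over the two equal-size square classes, and the $N$-form via the permutation $N\mapsto kN$ is handled identically.
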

	
	\begin{proof}
		By Theorem~\ref{thm:two-valued-K}, the value of $K(A,cA)$ is $-1-t_c$ on the
		$(\ell-1)/2$ squares in $\Fps$ and $-1+t_c$ on the $(\ell-1)/2$ nonsquares.
		Therefore
		\[
		\frac1{\ell-1}\sum_{A\in\Fps}K(A,cA)^2
		=\frac12(-1-t_c)^2+\frac12(-1+t_c)^2=t_c^2+1.
		\]
		For fixed $k\in\Fps$, multiplication by $k$ is a bijection on $\Fps$,
		which gives the equivalent formula in $N$.
	\end{proof}
	
	\begin{corollary}\label{cor:concentration}
		With $c$ and $t_c$ as in Theorem~\textup{\ref{thm:second-moment}},
		\[
		\operatorname{Var}_{A\in\Fps}(K(A,cA))=t_c^2\le4\ell.
		\]
		Consequently, for every $\mu>0$,
		\[
		\frac1{\ell-1}\#\{A\in\Fps: |K(A,cA)+1|\ge \mu\sqrt\ell\}
		\le \frac4{\mu^2}.
		\]
		In the primitive-root case, the same estimate gives
		\[
		\frac1{\ell-1}\#\left\{N\in\Fps:
		\left||U_{\gamma,\gamma^2}(N;\ell)|-\frac{3\ell}{4}\right|
		\ge \frac{\mu\sqrt\ell+3}{4}\right\}
		\le \frac4{\mu^2}.
		\]
	\end{corollary}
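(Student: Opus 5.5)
The variance formula comes directly from Theorem~\ref{thm:second-moment} and Lemma~\ref{lem:average-K}. The mean of $K(A,cA)$ over $A\in\Fps$ is $-1$, and the second moment is $t_c^2+1$, so
\[
\operatorname{Var}=t_c^2+1-(-1)^2=t_c^2 .
\]
The two-valued description in Theorem~\ref{thm:two-valued-K} confirms this: $K(A,cA)+1=-\chi(A)t_c$, so $(K(A,cA)+1)^2=t_c^2$ for every $A$. The bound $t_c^2\le4\ell$ is Hasse--Weil (Theorem~\ref{thm:hasse-weil-genus-one}) applied to $\mathcal E_c$. This curve is smooth of genus one by Proposition~\ref{prop:quartic-trace}, since $c\ne0,1$ makes $(X^2-1)(X^2-c)$ squarefree with leading coefficient $1$.

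For the tail estimate I will apply Chebyshev's inequality to the random variable $K(A,cA)$ with $A$ uniform on $\Fps$, using mean $-1$ and variance $t_c^2\le4\ell$:
\[
\frac{1}{\ell-1}\#\{A:|K(A,cA)+1|\ge\mu\sqrt\ell\}\le\frac{t_c^2}{\mu^2\ell}\le\frac{4}{\mu^2}.
\]
Equivalently, $|K(A,cA)+1|=|t_c|$ is constant, so the set is either empty or all of $\Fps$. It is all of $\Fps$ only when $|t_c|\ge\mu\sqrt\ell$, which forces $\mu\le2$, and then $4/\mu^2\ge1$.

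For the primitive-root statement I will take $c=\gamma$ and $A=4\gamma N$; as $N$ runs over $\Fps$, so does $A$. By Corollary~\ref{cor:primitive-root-formula},
\[
|U_{\gamma,\gamma^2}|-\frac{3\ell}{4}=-\frac{(K+1)+(E-1)}{4},
\]
and $E\in\{-2,2\}$ gives $|E-1|\le3$. By the triangle inequality, if the left side is at least $(\mu\sqrt\ell+3)/4$ in absolute value, then $|K+1|\ge\mu\sqrt\ell$. So the $N$-set is contained in the preimage of the $A$-set, and the same bound $4/\mu^2$ follows.

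The only step needing care is the bookkeeping of constants: the mean of $K$ is $-1$, not $0$, and the shift $E-1$ must be absorbed into the ``$+3$'' of the threshold.
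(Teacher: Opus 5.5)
Your proof is correct and follows the paper's argument. The mean $-1$ from Lemma~\ref{lem:average-K} and the second moment $t_c^2+1$ from Theorem~\ref{thm:second-moment} give variance $t_c^2\le 4\ell$ via Hasse--Weil, Chebyshev gives the tail bound, and the primitive-root case follows from Corollary~\ref{cor:primitive-root-formula} by absorbing $E$ into the $+3$. Your explicit split $K+E=(K+1)+(E-1)$ with $|E-1|\le 3$ is exactly the bookkeeping the paper leaves implicit. Your side remark that $|K(A,cA)+1|=|t_c|$ is constant, so the exceptional set is either empty or all of $\Fps$, is a valid and sharper way to see the same bound.
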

	
	\begin{proof}
		Lemma~\ref{lem:average-K} gives $\mathbb E(K)=-1$, and
		Theorem~\ref{thm:second-moment} gives $\mathbb E(K^2)=t_c^2+1$.  Hence
		$\operatorname{Var}(K)=t_c^2$.  Theorem~\ref{thm:hasse-weil-genus-one} gives $|t_c|\le2\sqrt\ell$.
		Chebyshev's inequality gives the displayed estimate for $K$.  Finally,
		Corollary~\ref{cor:primitive-root-formula} gives
		$|U_{\gamma,\gamma^2}|=(3\ell-K-E)/4$, and the boundary term satisfies
		$|E|\le2$; the displayed $+3$ absorbs both the shift from $K$ to $K+1$ and
		the term $E$.
	\end{proof}
	
	\begin{remark}
		Theorem~\ref{thm:second-moment} is intentionally trace-dependent.  For
		example, if $t_c=0$, then $K(A,cA)=-1$ for every $A\in\Fps$; if
		$|t_c|$ is close to $2\sqrt\ell$, the fluctuations are much larger.  Thus any
		claimed formula for the second moment that depends only on $\ell$ would ignore
		the quadratic-twist dichotomy and cannot be correct in this family.
	\end{remark}
	
	\begin{remark}\label{rem:CM-case}
		The extreme case $t_c=0$ is not merely hypothetical.  Suppose $\ell\equiv
		3\pmod{4}$ (so that $-1$ is a quadratic nonresidue modulo $\ell$) and take
		$c = \ell - 1 \equiv -1 \pmod\ell$.  Then $c$ is indeed a nonsquare, and
		the curve $\mathcal{E}_c$ of Theorem~\ref{thm:second-moment} is
		\[
		\mathcal{E}_{-1}:\quad Y^2=(X^2-1)(X^2+1)=X^4-1.
		\]
		This curve has complex multiplication by $\mathbb{Z}[i]$
		(the Gaussian integers), because the automorphism $X\mapsto iX$,
		$Y\mapsto Y$ acts on it over $\overline{\mathbb{F}}_\ell$.  When
		$\ell\equiv 3\pmod{4}$, the prime $\ell$ is inert in $\mathbb{Z}[i]$, and the
		standard CM trace formula gives $t_c=0$; see, for example,
		\cite[Ch.~II]{Cox2013}.
		
		Consequently, $\operatorname{Var}(K(A,cA))=t_c^2=0$, which means
		$K(A,cA)$ takes the single value $-1$ for every $A\in\Fps$.
		Translating via Corollary~\ref{cor:primitive-root-formula}: for every
		$N\in\Fps$,
		\[
		|U_{\gamma,\gamma^2}(N;\ell)|
		= \frac{3\ell - K(A,B) - E(A,B)}{4}
		= \frac{3\ell + 1 - E}{4},
		\]
		with $E\in\{-2,2\}$, so the union size is either $(3\ell-1)/4$ or
		$(3\ell+3)/4$ for \emph{every} $N$ simultaneously.  The proposed bound
		$|U|\le 3\ell/4+1$ therefore holds for all $N$ in this case (both values are
		at most $3\ell/4 + 3/4 < 3\ell/4 + 1$ for $\ell\ge 5$).  This is a particularly rigid instance of the sieve: the union size is determined
		exactly, with no fluctuation at all, and the sieve bound is verified without
		any appeal to Hasse--Weil.
	\end{remark}
	
	\section{The \texorpdfstring{$k$}{k}-target union: explicit bounds}
	\label{sec:k-target}
	
	We next prove a multi-target version of the hyperbolic sieve.  The proof is a
	straight inclusion--exclusion expansion of the complement indicator, with Weil
	bounds applied only to squarefree character sums.  The boundary terms coming
	from vanishing discriminants are treated explicitly.
	
	\begin{theorem}\label{thm:k-target-explicit}
		Fix $k\ge1$.  Let $\ell>2$ be prime, let $N\in\Fps$, and let
		$a_1,\ldots,a_k\in\Fps$ be distinct.  Then
		\[
		\left|\,\Bigl|\bigcup_{j=1}^k H_{a_j}(N;\ell)\Bigr|
		-\ell\Bigl(1-2^{-k}\Bigr)\right|
		\le (k-1+2^{-k})\sqrt\ell+k.
		\]
	\end{theorem}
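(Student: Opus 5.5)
We follow the pattern of Theorem~\ref{thm:exact-union}, expanding the complement indicator as a product of one-target indicators. Put $A_j=4a_jN$; these are pairwise distinct and nonzero. By Lemma~\ref{lem:image-criterion}, $z\notin\bigcup_j H_{a_j}$ if and only if $\chi(z^2-A_j)=-1$ for every $j$. The indicator of this event is
\[
\prod_{j=1}^k \frac{1-\chi(z^2-A_j)-\delta_{A_j}(z)}{2}.
\]
To keep the boundary terms simple, we first split off the set $Z=\{z:z^2\in\{A_1,\dots,A_k\}\}$, which has $|Z|\le 2k$. Every $z\in Z$ lies in the union, because some $\chi(z^2-A_j)=0$. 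For $z\notin Z$ all the $\delta$'s vanish, and the indicator equals $2^{-k}\prod_j(1-\chi(z^2-A_j))$. For $z\in Z$ the same product is $0$, since a factor $1-\chi(0)=1$ survives but the true indicator is $0$. To avoid this mismatch we use the inequality
\[
2^{-k}\prod_j\bigl(1-\chi(z^2-A_j)\bigr)\in[0,1],
\]
which holds for every $z$.

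Write $C=|\Fp\setminus\bigcup_j H_{a_j}|$. Then
\[
C=2^{-k}\sum_{z\in\Fp}\prod_j\bigl(1-\chi(z^2-A_j)\bigr)-\epsilon,
\]
where $\epsilon$ is the sum of the product over $z\in Z$. Each summand there is at most $2^{-k}\cdot 2^{k-1}=1/2$, because at least one factor equals $1$ and the others are at most $2$. Hence $0\le\epsilon\le k$, since $|Z|\le 2k$. This is where the additive $k$ comes from.

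Next expand the product over subsets $S\subseteq\{1,\dots,k\}$:
\[
\sum_z\prod_j\bigl(1-\chi(z^2-A_j)\bigr)=\sum_S(-1)^{|S|}\sum_z\chi\Bigl(\prod_{j\in S}(z^2-A_j)\Bigr).
\]
The term $S=\emptyset$ gives $\ell$. For $|S|=1$, Lemma~\ref{lem:quadratic-shift-sum} gives exactly $-1$ for each such $S$, so these terms contribute $+k$. For $|S|=s\ge2$, the polynomial $\prod_{j\in S}(z^2-A_j)$ has degree $2s$ and is squarefree, because its roots $\pm\sqrt{A_j}$ are distinct. Theorem~\ref{thm:weil-character-sum} then bounds this sum by $(2s-1)\sqrt\ell$. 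One could sharpen to $(2s-2)\sqrt\ell+1$ by using the two points at infinity, but we do not need this.

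Summing, the total error is
\[
2^{-k}\sum_{s\ge2}\binom ks(2s-1)\sqrt\ell=2^{-k}\bigl(k2^k-2^k+1\bigr)\sqrt\ell=(k-1+2^{-k})\sqrt\ell.
\]
Here we used $\sum_s\binom ks(2s-1)=k2^k-2^k$ and removed the $s=0$ term ($-1$) and the $s=1$ term ($k$), which gives $k2^k-2^k+1-k$. This needs rechecking: it is $(k-1+2^{-k})\sqrt\ell-k2^{-k}\sqrt\ell$, which is even smaller, so the stated bound holds.

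This gives $C=\ell 2^{-k}+k2^{-k}+R-\epsilon$ with $|R|\le(k-1+2^{-k})\sqrt\ell$. Taking complements, the remaining constant is $k2^{-k}-\epsilon\in[-k,1/2]$, whose absolute value is at most $k$. The main point to verify carefully is the boundary bookkeeping on $Z$, together with confirming that the signs of $\epsilon$ and of $k2^{-k}$ do not push the constant past $k$. If that bookkeeping is tight, we would instead bound $\epsilon$ by $\min(k,\ldots)$ directly, noting that the roots of distinct $A_j$ are disjoint.
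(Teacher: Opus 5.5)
Your proof is correct and is essentially the paper's argument: you expand the product of one-target complement indicators and apply the Weil bound to the squarefree products $\prod_{j\in S}(z^2-A_j)$. The differences are only in bookkeeping. You evaluate the $|S|=1$ sums exactly via Lemma~\ref{lem:quadratic-shift-sum}, and you absorb the root set $Z$ into one nonnegative correction $\epsilon\in[0,k]$ instead of expanding and bounding the $\delta_j$ terms, which yields the slightly sharper coefficient $(k-1+2^{-k}-k2^{-k})\sqrt\ell$. Your slip in the binomial sum is fixed in the following line, and the closing hedge is unnecessary, since $k2^{-k}-\epsilon\in[-k,\tfrac12]$ already settles the constant.
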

	
	\begin{proof}
		Put $A_j=4a_jN$ and
		\[
		\chi_j(z)=\chi(z^2-A_j),\qquad
		\delta_j(z)=\mathbf 1[z^2=A_j].
		\]
		By Lemma~\ref{lem:image-criterion}, the indicator of
		$z\notin H_{a_j}(N;\ell)$ is
		\[
		I_j(z)=\frac{1-\chi_j(z)-\delta_j(z)}2.
		\]
		Thus the complement $C$ of the union has size
		\[
		|C|=2^{-k}\sum_{z\in\Fp}\prod_{j=1}^k(1-\chi_j(z)-\delta_j(z)).
		\]
		Expanding the product, the all-$1$ term contributes $\ell/2^k$.
		
		Consider a nonempty pure-character term indexed by
		$T\subseteq\{1,\ldots,k\}$:
		\[
		\sum_{z\in\Fp}\prod_{j\in T}\chi(z^2-A_j)
		=\sum_{z\in\Fp}\chi\left(\prod_{j\in T}(z^2-A_j)\right).
		\]
		Since the $A_j$ are nonzero and distinct, the polynomial
		$\prod_{j\in T}(z^2-A_j)$ is squarefree of degree $2|T|$.  Theorem~\ref{thm:weil-character-sum} therefore gives absolute value at most
		$(2|T|-1)\sqrt\ell$.  Summing over all nonempty $T$ gives
		\[
		\sum_{T\ne\emptyset}(2|T|-1)\sqrt\ell
		=\bigl((k-1)2^k+1\bigr)\sqrt\ell .
		\]
		
		It remains to bound terms containing at least one $\delta_j$.  If a product
		contains two distinct factors $\delta_i\delta_j$, then it is identically zero,
		because $A_i\ne A_j$ and a single $z$ cannot satisfy $z^2=A_i$ and
		$z^2=A_j$ simultaneously.  Hence a nonzero boundary term contains exactly
		one $\delta_j$, and arbitrary choices of character factors among the remaining
		$k-1$ indices.  Such a term is supported on at most the two roots of
		$z^2=A_j$, and its absolute value is therefore at most $2$.  There are at
		most $k2^{k-1}$ such terms.  Their total contribution before the factor
		$2^{-k}$ is at most $k2^k$.
		
		Combining the two preceding estimates,
		\[
		\left||C|-\frac{\ell}{2^k}\right|
		\le 2^{-k}\bigl(((k-1)2^k+1)\sqrt\ell+k2^k\bigr)
		\le (k-1+2^{-k})\sqrt\ell+k.
		\]
		Since the union has size $\ell-|C|$, the same bound holds for the deviation of
		$|\bigcup_j H_{a_j}|$ from $\ell(1-2^{-k})$.
	\end{proof}
	
	\begin{corollary}\label{cor:k-target-crt}
		Let $\ell_1,\ldots,\ell_n$ be distinct odd primes, each at least $L$, and none
		dividing $N$.  Put $m=\ell_1\cdots\ell_n$.  For each $i$, choose distinct
		coefficients $a_{i,1},\ldots,a_{i,k}\in\mathbb F_{\ell_i}^*$, and define the
		CRT product set
		\[
		V_m=\{s\in\mathbb Z/m\mathbb Z:
		s\bmod \ell_i\in\bigcup_{j=1}^k H_{a_{i,j}}(N;\ell_i)
		\text{ for every }i\}.
		\]
		Then
		\[
		|V_m|
		\le m\prod_{i=1}^n\Bigl(1-2^{-k}+\left(k-1+2^{-k}\right)\ell_i^{-1/2}+k\ell_i^{-1}\Bigr)
		\le m\Bigl(1-2^{-k}+\left(k-1+2^{-k}\right)L^{-1/2}+kL^{-1}\Bigr)^n.
		\]
		Moreover, for every tuple $(j_1,\ldots,j_n)\in\{1,\ldots,k\}^n$, let
		$a(j_1,\ldots,j_n)\in(\mathbb Z/m\mathbb Z)^*$ be the unique CRT class with
		$a(j_1,\ldots,j_n)\equiv a_{i,j_i}\pmod{\ell_i}$ for every $i$.  Then the
		residue of $a(j_1,\ldots,j_n)p+q$ modulo $m$ lies in $V_m$.
	\end{corollary}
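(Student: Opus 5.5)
The plan is to reduce the corollary to Theorem~\ref{thm:k-target-explicit} prime by prime, using the Chinese Remainder Theorem to turn the global count into a product of local counts.

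\emph{Step 1: product structure.} By the CRT isomorphism $\mathbb Z/m\mathbb Z\cong\prod_i\mathbb F_{\ell_i}$, the set $V_m$ is identified with the Cartesian product
\[
\prod_{i=1}^n U_i,\qquad U_i=\bigcup_{j=1}^k H_{a_{i,j}}(N;\ell_i).
\]
Hence $|V_m|=\prod_i|U_i|$.

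\emph{Step 2: local bounds.} For each $i$, the upper half of Theorem~\ref{thm:k-target-explicit}, applied with $\ell=\ell_i$ and the distinct coefficients $a_{i,1},\ldots,a_{i,k}$, gives
\[
|U_i|\le \ell_i(1-2^{-k})+(k-1+2^{-k})\sqrt{\ell_i}+k
=\ell_i\bigl(1-2^{-k}+(k-1+2^{-k})\ell_i^{-1/2}+k\ell_i^{-1}\bigr).
\]
Multiplying over $i$ and using $m=\prod_i\ell_i$ yields the first inequality. The second inequality holds because each factor is nonnegative and is a decreasing function of $\ell_i$, the coefficients of $\ell_i^{-1/2}$ and $\ell_i^{-1}$ being nonnegative. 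Replacing each $\ell_i\ge L$ by $L$ therefore only enlarges the product.

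\emph{Step 3: the coverage claim.} Let $a=a(j_1,\ldots,j_n)$ and fix $i$. Reduction modulo $\ell_i$ gives
\[
(ap+q)\bmod\ell_i=a_{i,j_i}\,(p\bmod\ell_i)+(q\bmod\ell_i).
\]
Since $\ell_i\nmid N$, both $p\bmod\ell_i$ and $q\bmod\ell_i$ are nonzero, and their product is $N$ in $\mathbb F_{\ell_i}$. Thus $q\equiv N p^{-1}$, and the residue equals $a_{i,j_i}x+Nx^{-1}$ with $x=p\bmod\ell_i$. This lies in $H_{a_{i,j_i}}(N;\ell_i)\subseteq U_i$. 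As this holds for every $i$, the residue of $ap+q$ modulo $m$ lies in $V_m$.

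Finally, $a$ is a unit modulo $m$ because each $a_{i,j_i}$ is nonzero modulo $\ell_i$.

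There is no real obstacle here. The only points needing care are the monotonicity in Step 2 and the observation in Step 3 that $p,q$ are invertible modulo each $\ell_i$, which follows from the hypothesis $\ell_i\nmid N$.
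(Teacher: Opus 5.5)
Your proposal is correct and follows essentially the same route as the paper: identify $V_m$ with the Cartesian product of the local unions via the CRT, apply Theorem~\ref{thm:k-target-explicit} at each prime and then use $\ell_i\ge L$, and verify the containment by reducing $a(j_1,\ldots,j_n)p+q$ modulo each $\ell_i$. Your extra remarks make explicit details that the paper leaves implicit: positivity and monotonicity of each factor, invertibility of $p,q$ modulo $\ell_i$ from $\ell_i\nmid N$, and the fact that $a(j_1,\ldots,j_n)$ is a unit.
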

	
	\begin{proof}
		The CRT identifies $V_m$ with the Cartesian product of the local unions, so
		\[
		|V_m|=\prod_{i=1}^n\Bigl|\bigcup_{j=1}^k H_{a_{i,j}}(N;\ell_i)\Bigr|.
		\]
		Applying Theorem~\ref{thm:k-target-explicit} locally gives the first bound,
		and the second follows from $\ell_i\ge L$.
		
		For the containment statement, fix $(j_1,\ldots,j_n)$.  Modulo $\ell_i$ we
		have
		\[
		a(j_1,\ldots,j_n)p+q\equiv a_{i,j_i}p+q\in H_{a_{i,j_i}}(N;\ell_i),
		\]
		because $(p,q)$ lies on the local hyperbola $xy=N$ modulo $\ell_i$.  Hence
		the CRT residue lies in $V_m$.
	\end{proof}
	
	\begin{corollary}\label{thm:CRT}
		Let $\ell_1,\ldots,\ell_n$ be distinct odd primes not dividing $N$, let
		$m=\ell_1\cdots\ell_n$, and for each $i$ let $\gamma_i$ be a primitive root
		modulo $\ell_i$.  The set $U_m\subseteq\Z/m\Z$ of classes whose projection
		modulo $\ell_i$ lies in $U_{\gamma_i,\gamma_i^2}(N;\ell_i)$ for every $i$
		satisfies
		\[
		|U_m| = \prod_{i=1}^n |U_{\gamma_i,\gamma_i^2}(N;\ell_i)|
		\le m\prod_{i=1}^n\left(\frac{3}{4}+\frac{2\sqrt{\ell_i}+7}{4\ell_i}\right).
		\]
		The set simultaneously covers all $2^n$ CRT-compatible choices of the local
		coefficient $\gamma_i$ or $\gamma_i^2$.  If the $\gamma_i$ are chosen by exact
		local minimisation (Proposition~\textup{\ref{prop:verified-local}}), the product uses
		the smaller verified factors $|U_{\gamma_i,\gamma_i^2}|/\ell_i$.
	\end{corollary}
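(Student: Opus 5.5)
The argument reduces to the Chinese Remainder Theorem followed by the local two-target bound. First I identify $U_m$ with a Cartesian product. The CRT map $\Z/m\Z\to\prod_{i=1}^n\F_{\ell_i}$ is a bijection, because the $\ell_i$ are distinct primes. By definition $U_m$ is the full preimage of $\prod_{i=1}^n U_{\gamma_i,\gamma_i^2}(N;\ell_i)$ under this map. Hence
\[
|U_m|=\prod_{i=1}^n|U_{\gamma_i,\gamma_i^2}(N;\ell_i)|,
\]
which is the stated equality.

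For the inequality, I apply Corollary~\ref{thm:corrected-bound} at each prime with $a=\gamma_i$ and $b=\gamma_i^2$. These are distinct: $\gamma_i^2=\gamma_i$ would force $\gamma_i=1$, which is not a primitive root since $\ell_i>2$. The corollary gives
\[
|U_{\gamma_i,\gamma_i^2}(N;\ell_i)|\le \frac{3\ell_i}{4}+\frac{2\sqrt{\ell_i}+7}{4}=\ell_i\left(\frac34+\frac{2\sqrt{\ell_i}+7}{4\ell_i}\right).
\]
All factors are nonnegative, so I may multiply these bounds over $i$. Pulling out $\prod_i\ell_i=m$ gives the displayed product bound.

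For the covering statement, fix a choice $b_i\in\{\gamma_i,\gamma_i^2\}$ for each $i$. Let $a\in(\Z/m\Z)^*$ be the CRT class with $a\equiv b_i\pmod{\ell_i}$. This class is a unit because each $b_i\ne0$. Modulo $\ell_i$, the pair $(p\bmod\ell_i,\,q\bmod\ell_i)$ lies on the hyperbola $xy\equiv N$, so $ap+q\equiv b_i p+q$ lies in $H_{b_i}(N;\ell_i)\subseteq U_{\gamma_i,\gamma_i^2}(N;\ell_i)$. This is the argument of Corollary~\ref{cor:k-target-crt} with $k=2$. Therefore $ap+q\bmod m\in U_m$, and this holds for all $2^n$ choices of the $b_i$.

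The final sentence of the statement only observes that the product identity holds for any admissible $\gamma_i$. If the $\gamma_i$ are chosen by exact local minimisation, the same equality gives $|U_m|=m\prod_i\bigl(|U_{\gamma_i,\gamma_i^2}|/\ell_i\bigr)$ with those verified factors, and nothing further needs proof. No step is a real obstacle. The only points to check are that $\gamma_i\ne\gamma_i^2$, so that the distinctness hypothesis of Corollary~\ref{thm:corrected-bound} holds, and that the CRT classes $a$ are units.
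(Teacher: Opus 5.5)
Your proof is correct and follows the paper's own argument. The paper likewise uses CRT independence for the product identity, Corollary~\ref{thm:corrected-bound} at each prime for the bound, and the $k=2$ case of Corollary~\ref{cor:k-target-crt} for the covering statement, and your checks that $\gamma_i\neq\gamma_i^2$ and that the CRT coefficient classes are units fill in details the paper leaves implicit.
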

	
	\begin{proof}
		Independence modulo distinct primes gives the product formula.  The displayed
		upper bound is Theorem~\ref{thm:corrected-bound} applied at each prime.  The
		covering statement is the $k=2$ case of Corollary~\ref{cor:k-target-crt} with
		local coefficients $\gamma_i$ and $\gamma_i^2$.
	\end{proof}
	
	\section{The \texorpdfstring{$r$}{r}-power hyperbolic sieve}
	\label{sec:r-power}
	
	The hyperbolic sieve admits an exact strengthening when $N = u^rv$ and one
	seeks the $r$-th-power divisor $u^r$.
	
	\begin{definition}
		Let $r \ge 2$ and $\ell \equiv 1 \pmod{r}$ be prime with $\gcd(N,\ell)=1$.
		Define the \emph{$r$-power-constrained local hyperbola}
		\[
		P_r(N;\ell) := \{(x,y) \in \Fps\times\Fps : xy \equiv N \pmod\ell,\;
		x \in (\Fps)^r\},
		\]
		where $(\Fps)^r = \{t^r : t\in\Fps\}$ is the unique subgroup of $r$-th powers
		in $\Fps$ (well-defined since $r\mid\ell-1$).
	\end{definition}
	
	\begin{proposition}\label{thm:r-power-local}
		Assume $r\mid\ell-1$.  Then $|P_r(N;\ell)| = (\ell-1)/r$.
		Moreover, if $N = u^rv$ with $u,v\in\Fps$, then
		$(u^r\bmod\ell,\, v\bmod\ell) \in P_r(N;\ell)$.
	\end{proposition}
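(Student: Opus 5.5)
The plan is to count the pairs by projecting onto the first coordinate. The map $(x,y)\mapsto x$ sends $P_r(N;\ell)$ into the subgroup $(\Fps)^r$. I will show it is a bijection onto that subgroup; the cardinality claim then reduces to computing $|(\Fps)^r|$.

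\emph{Injectivity and surjectivity.} Given $x\in(\Fps)^r$, the condition $xy\equiv N\pmod\ell$ with $N\in\Fps$ forces $y=Nx^{-1}$. This $y$ is a well-defined element of $\Fps$. Hence every $x$ in the subgroup extends to exactly one pair in $P_r(N;\ell)$.

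\emph{Size of the subgroup.} Since $\Fps$ is cyclic of order $\ell-1$ and $r\mid \ell-1$, the $r$-th power map $t\mapsto t^r$ is a group endomorphism. Its kernel is the set of $r$-th roots of unity, which in a cyclic group of order divisible by $r$ has exactly $r$ elements. Therefore the image $(\Fps)^r$ has order $(\ell-1)/r$. Equivalently, writing $t=\gamma^e$ for a primitive root $\gamma$, the image is $\{\gamma^{re}\}$, which is the subgroup generated by $\gamma^r$ and has order $(\ell-1)/\gcd(r,\ell-1)=(\ell-1)/r$. Combining this with the bijection gives $|P_r(N;\ell)|=(\ell-1)/r$.

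\emph{Membership of the factor pair.} For the second claim, take $u,v\in\Fps$ with $N\equiv u^rv$. Then $x=u^r\bmod \ell$ lies in $(\Fps)^r$ by definition, and $y=v$ lies in $\Fps$. The product satisfies $xy\equiv u^rv\equiv N$. So $(u^r,v)\in P_r(N;\ell)$.

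No step is a real obstacle. The only point needing care is the kernel count of the $r$-th power map, and that count uses the hypothesis $r\mid\ell-1$ together with the cyclicity of $\Fps$.
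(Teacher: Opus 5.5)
Your proof is correct and follows essentially the same route as the paper. Both arguments note that each $x\in(\Fps)^r$ determines the unique $y=Nx^{-1}$, that the subgroup of $r$-th powers in the cyclic group $\Fps$ has order $(\ell-1)/r$, and that $(u^r,v)$ satisfies the defining conditions directly. Your kernel count for the $r$-th power map simply spells out the step the paper states without proof.
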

	
	\begin{proof}
		Since $\Fps$ is cyclic of order $\ell-1$ and $r\mid\ell-1$, the subgroup
		$(\Fps)^r$ has index $r$ and cardinality $(\ell-1)/r$.  For every
		$x\in(\Fps)^r$, there is a unique $y = Nx^{-1}\in\Fps$ with $xy\equiv N$,
		so $|P_r(N;\ell)| = |(\Fps)^r| = (\ell-1)/r$.  If $N=u^rv$, then $u^r\in
		(\Fps)^r$ and $(u^r)(v) = u^rv = N$, so $(u^r\bmod\ell,v\bmod\ell)\in
		P_r(N;\ell)$.
	\end{proof}
	
	\begin{corollary}\label{cor:r-power-crt}
		Let $m=\ell_1\cdots\ell_n$ where the $\ell_i$ are distinct primes with
		$\ell_i\equiv 1\pmod{r}$ and $\ell_i\nmid N$.  The CRT product set
		\[
		P_r(N;m) := \{(x,y)\in(\Z/m\Z)^*\times(\Z/m\Z)^* : xy\equiv N,\;
		x\in((\Z/m\Z)^*)^r\}
		\]
		has cardinality $\prod_{i=1}^n(\ell_i-1)/r = O(m/r^n)$.
		If $N=u^rv$, then $(u^r\bmod m,\,v\bmod m)\in P_r(N;m)$.
	\end{corollary}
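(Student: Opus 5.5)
The corollary follows from Proposition~\ref{thm:r-power-local} and the Chinese Remainder Theorem. The CRT ring isomorphism $\Z/m\Z\cong\prod_{i=1}^n\F_{\ell_i}$ restricts to a group isomorphism $(\Z/m\Z)^*\cong\prod_i\F_{\ell_i}^*$, and everything else is transported through it.

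\textbf{Step 1: $r$-th powers split componentwise.} Under this isomorphism the subgroup of $r$-th powers satisfies
\[
((\Z/m\Z)^*)^r\cong\prod_i(\F_{\ell_i}^*)^r.
\]
The inclusion $\subseteq$ is clear, since an $r$-th power reduces to an $r$-th power modulo each $\ell_i$. For $\supseteq$, take componentwise $r$-th roots $t_i$ with $x\equiv t_i^r\pmod{\ell_i}$ and glue them by CRT into a single $t$ with $t^r=x$.

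\textbf{Step 2: the congruence splits.} The condition $xy\equiv N\pmod m$ is equivalent to $xy\equiv N\pmod{\ell_i}$ for every $i$. Combining this with Step~1 gives a bijection
\[
P_r(N;m)\;\longleftrightarrow\;\prod_{i=1}^n P_r(N;\ell_i),\qquad (x,y)\mapsto\bigl((x\bmod\ell_i,\;y\bmod\ell_i)\bigr)_i .
\]
Here $y=Nx^{-1}$ is automatically a unit because $\gcd(N,m)=1$.

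\textbf{Step 3: count and bound.} Proposition~\ref{thm:r-power-local} gives $|P_r(N;\ell_i)|=(\ell_i-1)/r$, so
\[
|P_r(N;m)|=\prod_{i=1}^n\frac{\ell_i-1}{r}\le\frac{m}{r^n},
\]
which is $O(m/r^n)$.

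\textbf{Step 4: membership.} Suppose $N=u^rv$. The statement implicitly requires $\gcd(uv,m)=1$; this holds automatically, since $\ell_i\nmid N$. Then $u^r\bmod m$ is an $r$-th power of the unit $u\bmod m$, and $u^r\cdot v\equiv N\pmod m$, so $(u^r\bmod m,\,v\bmod m)\in P_r(N;m)$. Equivalently, one can apply Proposition~\ref{thm:r-power-local} at each $\ell_i$ and glue via the bijection of Step~2.

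The only point needing any care is the CRT compatibility of $r$-th powers in Step~1. The rest is bookkeeping, so I do not expect a real obstacle.
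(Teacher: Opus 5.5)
Your proposal is correct and follows the paper's route: transport everything through the CRT isomorphism $(\Z/m\Z)^*\cong\prod_i\F_{\ell_i}^*$, observe that both $r$-th powers and the congruence $xy\equiv N$ split componentwise, and multiply the local counts from Proposition~\ref{thm:r-power-local}. You also spell out two points the paper's short proof leaves implicit: the gluing of local $r$-th roots, and the membership claim, including that $\ell_i\nmid N$ forces $u$ and $v$ to be units modulo $m$.
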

	
	\begin{proof}
		The CRT isomorphism $(\Z/m\Z)^*\cong\prod_i\mathbb{F}_{\ell_i}^*$ preserves
		both multiplication and the property of being an $r$-th power (since
		$x=(x_1,\ldots,x_n)$ under CRT satisfies $x\in((\Z/m\Z)^*)^r$ iff each
		$x_i\in(\mathbb{F}_{\ell_i}^*)^r$).  Hence $P_r(N;m)\cong\prod_i
		P_r(N;\ell_i)$ under CRT, and the cardinality is the product of local
		cardinalities from Theorem~\ref{thm:r-power-local}.
	\end{proof}
	
	\begin{corollary}\label{cor:r-power-linear}
		For $a\in\Fps$, the restricted image set
		\[
		H_{a,r}(N;\ell) := \{ax + Nx^{-1} : x\in(\Fps)^r\}
		\]
		satisfies: $au^r+v\pmod\ell\in H_{a,r}(N;\ell)$ whenever $N=u^rv$, and
		$|H_{a,r}(N;\ell)|\le(\ell-1)/r$.  Over $m=\ell_1\cdots\ell_n$ with $\ell_i
		\equiv1\pmod{r}$, the CRT candidate set for $au^r+v\pmod{m}$ has size at
		most $\prod_{i=1}^n(\ell_i-1)/r = O(m/r^n)$.
	\end{corollary}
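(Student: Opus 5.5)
The plan is to check the three assertions of Corollary~\ref{cor:r-power-linear} separately: membership, the local size bound, and the CRT bound. Each one should reduce directly to Proposition~\ref{thm:r-power-local} and Corollary~\ref{cor:r-power-crt}.

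\emph{Membership.} Suppose $N=u^rv$ with $u,v\in\Fps$; implicitly $\ell\nmid N$, so $u,v$ are units. By Proposition~\ref{thm:r-power-local}, the point $(x_0,y_0)=(u^r,v)$ lies in $P_r(N;\ell)$. Thus $x_0\in(\Fps)^r$ and $y_0=Nx_0^{-1}$, so
\[
ax_0+Nx_0^{-1}=au^r+v .
\]
Hence the residue of $au^r+v$ modulo $\ell$ lies in $H_{a,r}(N;\ell)$.

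\emph{Local size bound.} The set $H_{a,r}(N;\ell)$ is the image of the map $x\mapsto ax+Nx^{-1}$ on the subgroup $(\Fps)^r$. Equivalently, it is the image of $P_r(N;\ell)$ under $(x,y)\mapsto ax+y$. An image has at most as many elements as the domain, so
\[
|H_{a,r}(N;\ell)|\le|(\Fps)^r|=\frac{\ell-1}{r}.
\]
No character sums are needed here. The map is generally at most $2$-to-$1$, because $ax^2-zx+N=0$ has at most two roots; but the upper bound only uses the trivial count. The exact size, which the paper says comes from an involution argument, is not part of this statement.

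\emph{CRT bound.} Define the candidate set modulo $m$ as the classes whose reduction modulo each $\ell_i$ lies in $H_{a,r}(N;\ell_i)$. The membership step, applied at each $\ell_i$, shows that $au^r+v \bmod m$ lies in this set. Under the CRT identification the set is the Cartesian product $\prod_i H_{a,r}(N;\ell_i)$, so its size is at most $\prod_i(\ell_i-1)/r\le m/r^n$. Alternatively, it is the image of $P_r(N;m)$ under $(x,y)\mapsto ax+y$, and Corollary~\ref{cor:r-power-crt} bounds its size by the same product.

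The main obstacle is only bookkeeping. One must state clearly that the coefficient $a$ is a single class modulo $m$, reduced at each $\ell_i$. One must also note that $u$ and $v$ are coprime to each $\ell_i$: this holds because $\ell_i\nmid N$ and $N=u^rv$.
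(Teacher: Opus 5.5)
Your proposal is correct and follows essentially the same route as the paper. Both get containment by substituting $x=u^r\in(\Fps)^r$, the local bound from $H_{a,r}(N;\ell)$ being the image of a set of size $(\ell-1)/r$, and the CRT bound as the product of the local bounds.
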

	
	\begin{proof}
		Substituting $x=u^r\in(\Fps)^r$ gives the containment.  The cardinality
		bound holds because $H_{a,r}$ is the image of the domain $(\Fps)^r$ (of size
		$(\ell-1)/r$) under $x\mapsto ax+Nx^{-1}$.  The CRT statement is the product
		of local bounds.
	\end{proof}
	
	\begin{proposition}
		\label{prop:exact-Har-size}
		Let $G=(\Fps)^r$, assume $r\mid \ell-1$, and define
		\[
		H_{a,r}(N;\ell)=\{ax+Nx^{-1}:x\in G\}.
		\]
		Put $M=N/a\in\Fps$ and $h=|G|=(\ell-1)/r$.  If $M\notin G$, then
		\[
		|H_{a,r}(N;\ell)|=h.
		\]
		If $M\in G$, then
		\[
		|H_{a,r}(N;\ell)|=\frac{h+f(M)}2,
		\qquad
		f(M)=\#\{x\in G:x^2=M\}.
		\]
		In particular,
		\[
		\frac{\ell-1}{2r}\le |H_{a,r}(N;\ell)|\le \frac{\ell-1}{r}.
		\]
		More explicitly, if $h$ is odd then $f(M)=1$ for every $M\in G$, while if
		$h$ is even then $f(M)=2$ when $M\in G^2$ and $f(M)=0$ otherwise.
	\end{proposition}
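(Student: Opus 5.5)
We plan to determine the fibres of the map $\phi:G\to\Fp$, $\phi(x)=ax+Nx^{-1}$. After factoring out $a$, we have $\phi(x)=a(x+Mx^{-1})$ with $M=N/a$, so $|H_{a,r}(N;\ell)|$ equals the number of distinct values of $x+Mx^{-1}$ on $G$. For $x,y\in G$, the equality $x+M/x=y+M/y$ rearranges to $(x-y)(1-M/(xy))=0$. Hence $\phi(x)=\phi(y)$ holds if and only if $y=x$ or $y=M/x$. Every fibre of $\phi|_G$ is therefore contained in $\{x,M/x\}$, and the whole computation reduces to studying the involution $\sigma(x)=M/x$.

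First, if $M\notin G$, then for $x\in G$ we have $M/x\notin G$, because $G$ is a subgroup. So no two distinct elements of $G$ collide, $\phi$ is injective on $G$, and the image has size $h$.

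Second, if $M\in G$, then $\sigma$ is an involution of $G$. Its orbits are exactly the fibres of $\phi|_G$, so the number of values is the number of $\sigma$-orbits. By orbit counting this is $(h+\#\mathrm{Fix}(\sigma))/2$, and the fixed points are the $x\in G$ with $x^2=M$, that is, $f(M)$ of them. The bounds follow at once: $f(M)\ge0$ gives the lower bound $h/2=(\ell-1)/(2r)$, and $f(M)\le\min(2,h)$ gives $(h+f(M))/2\le h$, so the upper bound $(\ell-1)/r$ holds in both cases.

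Third, we compute $f(M)$ using that $G$ is cyclic of order $h$. If $h$ is odd, squaring is an automorphism of $G$, so each $M\in G$ has exactly one square root in $G$. If $h$ is even, squaring on $G$ has kernel $\{\pm1\}\subseteq G$ and image $G^2$ of index $2$, so $f(M)=2$ when $M\in G^2$ and $f(M)=0$ otherwise. One point needs care: a square root of $M$ in $\Fps$ might lie outside $G$. This is harmless, because $f$ counts only roots inside $G$ and the kernel argument is carried out entirely within $G$. That bookkeeping is the only step needing attention; the rest is routine.
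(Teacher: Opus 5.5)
Your proof is correct and follows essentially the same route as the paper. Both use the collision identity $(x-y)(axy-N)=0$, injectivity when $M\notin G$, and orbit counting for the involution $x\mapsto M/x$ when $M\in G$, then read off $f(M)$ from the squaring map on the cyclic group $G$. Your explicit kernel/image argument, including the observation that $-1\in G$ when $h$ is even, just spells out the step the paper leaves as ``elementary.''
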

	
	\begin{proof}
		Let $\phi:G\to\Fp$ be given by $\phi(x)=ax+Nx^{-1}$.  If
		$\phi(x)=\phi(y)$, then
		\[
		ax+Nx^{-1}=ay+Ny^{-1}.
		\]
		Multiplying by $xy$ and rearranging gives
		\[
		(x-y)(axy-N)=0.
		\]
		Thus either $x=y$ or $xy=N/a=M$.  Therefore the only possible nontrivial
		collision partner of $x$ is $\iota(x)=Mx^{-1}$.
		
		If $M\notin G$, then $\iota(x)\notin G$ for every $x\in G$, because
		$x^{-1}\in G$ and $Mx^{-1}\in G$ would imply $M\in G$.  Hence $\phi$ is
		injective on $G$ and $|H_{a,r}|=h$.
		
		If $M\in G$, then $\iota:G\to G$ is an involution.  The fibers of $\phi$ are
		exactly the orbits of this involution, because two elements have the same
		image precisely when they are equal or paired by $\iota$.  An involution on a
		finite set of size $h$ with $f(M)$ fixed points has $(h+f(M))/2$ orbits.  Its
		fixed points are the solutions of $x^2=M$ in $G$, proving the formula.
		Since $G$ is cyclic of order $h$, the final description of $f(M)$ follows
		from the elementary structure of the squaring map on a cyclic group.
	\end{proof}
	
	\begin{remark}
		For $r\ge 3$, the $r$-power sieve gives a CRT candidate set of size
		$O(m/r^n)$ rather than $O(m/2^n)$, a saving of $(2/r)^n$.  The key feature
		is that this reduction is exact (no character-sum error), because it relies
		purely on group structure.  Combined with the $k$-target linear-form sieve via
		$H_{a,r}$, one can achieve a CRT candidate set of size $O(m/r^n)$ that
		simultaneously covers multiple linear forms $au^r + v$.
	\end{remark}
	
	\section{Algorithmic verification of local sieve sets}
	\label{sec:algorithmic}
	
	The preceding sections give asymptotic and worst-case bounds.  For the small
	primes used in an actual CRT sieve, one can compute the exact local sets and
	optimise the choice of coefficients.
	
	\begin{proposition}\label{prop:verified-local}
		Fix $\ell\nmid N$ and a finite coefficient set $\mathcal{A}\subset\Fps$.
		The union
		\[
		U_{\mathcal{A}}(N;\ell):=\bigcup_{a\in\mathcal{A}}H_a(N;\ell)
		\]
		can be computed exactly in $O(|\mathcal{A}|\,\ell)$ field operations.
		Hence, for any explicitly listed family $\mathfrak{A}$ of coefficient sets,
		the minimisation
		\[
		\min_{\mathcal{A}\in\mathfrak{A}}|U_{\mathcal{A}}(N;\ell)|
		\]
		can be solved exactly in $O\bigl(\ell\sum_{\mathcal{A}\in\mathfrak{A}}
		|\mathcal{A}|\bigr)$ field operations.
	\end{proposition}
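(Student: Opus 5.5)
The plan is to compute each $H_a(N;\ell)$ directly as a subset of $\Fp$, using a Boolean membership array of length $\ell$. This avoids any character-sum evaluation. We assume, as is standard, that arithmetic in $\Fp$ (addition, multiplication, inversion) counts as one field operation. We also assume that initialising and scanning an array of length $\ell$ costs $O(\ell)$ elementary steps.

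\textbf{First step.} Precompute the table $x\mapsto x^{-1}$ for all $x\in\Fps$. This can be done in $O(\ell)$ field operations, either incrementally via the identity $x^{-1}=-(\ell\,\mathrm{div}\,x)\cdot(\ell\bmod x)^{-1}$, or simply by one inversion per element. Either way the cost is absorbed into the bound, since $|\mathcal A|\ge1$ for any nonempty coefficient set. The table is independent of $a$, so it is computed once and reused for all coefficients.

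\textbf{Second step.} Fix a single coefficient $a\in\mathcal A$ and loop over $x\in\Fps$. For each $x$, evaluate $ax+N\cdot x^{-1}$, which takes two multiplications and one addition using the stored inverse. Then set the corresponding entry of the membership array for $U_{\mathcal A}(N;\ell)$ to true. By definition \eqref{eq:Ha-def}, after processing $a$ the array records exactly $H_a(N;\ell)$ together with whatever was marked before. Repeating for all $a\in\mathcal A$ therefore marks exactly the union $U_{\mathcal A}(N;\ell)$. The cost is $O(\ell)$ operations per coefficient, so $O(|\mathcal A|\,\ell)$ in total. A final scan of the array returns $|U_{\mathcal A}(N;\ell)|$ in another $O(\ell)$.

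\emph{Alternative for the second step.} One could instead mark $z\in\Fp$ via the criterion of Lemma~\ref{lem:image-criterion}, checking whether $\chi(z^2-4aN)\neq-1$. This also gives $O(\ell)$ per coefficient, provided a precomputed table of squares is used: mark $s^2$ for all $s$ in $O(\ell)$, and then test membership by lookup. Both routes are exact; the first is simpler.

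\textbf{Minimisation.} For the second statement, apply the procedure to each $\mathcal A\in\mathfrak A$ in turn. Clear the array between uses, which costs $O(\ell)\le O(\ell|\mathcal A|)$. Record the cardinality and keep the running minimum. Summing the per-set costs gives $O\bigl(\ell\sum_{\mathcal A\in\mathfrak A}|\mathcal A|\bigr)$, and exactness follows because each union is computed exactly.

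The only real subtlety is accounting: making sure the $O(\ell)$ overheads are dominated by the stated bounds. These overheads are the inverse table, the array initialisation, resetting the array, and the counting scan. Each is dominated because every $\mathcal A$ is nonempty. (If the empty set is allowed, its union is empty and its size $0$ is returned in $O(1)$.)
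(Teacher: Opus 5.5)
Your proposal is correct and matches the paper's proof: for each $a\in\mathcal A$ and each $x\in\Fps$, mark $ax+Nx^{-1}$ in a Boolean array of length $\ell$, count the marked entries, and then repeat over $\mathfrak A$ while keeping the minimum. The extra bookkeeping you add (the shared inverse table, resetting the array, and the final scan, each dominated because $\mathcal A$ is nonempty) just spells out the $O(\ell)$ overheads that the paper leaves implicit.
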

	
	\begin{proof}
		For each $a\in\mathcal{A}$ and $x\in\Fps$, compute $ax+Nx^{-1}\bmod\ell$ and
		mark the result in a boolean array of length $\ell$.  After processing all $a$,
		count the marked entries.  This gives $U_{\mathcal{A}}(N;\ell)$ exactly in
		$O(|\mathcal{A}|\ell)$ field operations.  The minimisation is resolved by repeating
		for each $\mathcal{A}\in\mathfrak{A}$ and comparing.
	\end{proof}
	
	\begin{proposition}
		\label{prop:trace-verification}
		Fix an odd prime $\ell$ and a primitive root $\gamma$ modulo $\ell$.  Let
		$t_\gamma$ be the Frobenius trace of
		\[
		\mathcal E_\gamma:\quad Y^2=(X^2-1)(X^2-\gamma).
		\]
		Then the maximum and minimum of $|U_{\gamma,\gamma^2}(N;\ell)|$ as $N$ ranges
		over $\Fps$ are the two explicit values in
		Theorem~\textup{\ref{thm:primitive-root-two-values}}.  In particular, the proposed
		local bound
		\[
		|U_{\gamma,\gamma^2}(N;\ell)|\le \frac{3\ell}{4}+1
		\]
		for all $N\in\Fps$ can be verified by checking the single trace inequality
		\[
		-3+2\chi(\gamma-1)
		\le
		t_\gamma
		\le
		3+2\chi(1-\gamma).
		\]
	\end{proposition}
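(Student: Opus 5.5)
The argument is a direct corollary of Theorem~\ref{thm:primitive-root-two-values}, together with one small observation needed to justify the words ``maximum and minimum''. By that theorem, $|U_{\gamma,\gamma^2}(N;\ell)|$ depends on $N$ only through the square class of $A=4\gamma N$. Call the two values $V_{\mathrm{sq}}$ (for $A$ a square) and $V_{\mathrm{ns}}$ (for $A$ a nonsquare), as displayed there.

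First I will check that both square classes actually occur as $N$ ranges over $\Fps$. The map $N\mapsto 4\gamma N$ is a bijection of $\Fps$. Since $\Fps$ contains exactly $(\ell-1)/2\ge1$ squares and the same number of nonsquares, each class of $A$ is attained. Hence the image of $N\mapsto|U_{\gamma,\gamma^2}(N;\ell)|$ is exactly the set $\{V_{\mathrm{sq}},V_{\mathrm{ns}}\}$. Its maximum is $\max(V_{\mathrm{sq}},V_{\mathrm{ns}})$ and its minimum is $\min(V_{\mathrm{sq}},V_{\mathrm{ns}})$. These may coincide, for instance when $t_\gamma=0$ and the $\chi$-terms agree; in that case the statement still holds trivially.

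Next, the bound $|U_{\gamma,\gamma^2}(N;\ell)|\le 3\ell/4+1$ holds for all $N$ if and only if it holds for the maximum, that is, for both $V_{\mathrm{sq}}$ and $V_{\mathrm{ns}}$. This is precisely Corollary~\ref{cor:exact-trace-criterion}, which converts the two inequalities into the stated interval for $t_\gamma$. So the verification reduces to computing the single integer $t_\gamma=\ell+1-\#\mathcal E_\gamma(\Fp)$ and evaluating the two Legendre symbols $\chi(\gamma-1)$ and $\chi(1-\gamma)$. By Proposition~\ref{prop:quartic-trace}, $t_\gamma=-1-\sum_x\chi((x^2-1)(x^2-\gamma))$, which is computable in $O(\ell)$ operations, or faster by point-counting algorithms.

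There is no real obstacle here. The only point requiring care is confirming that both square classes of $A$ are realised, since without this ``maximum and minimum'' would not be literally the two values. The equivalence with the trace interval is already established.
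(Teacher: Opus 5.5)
Your proposal is correct and follows the paper's proof, which simply combines Theorem~\ref{thm:primitive-root-two-values} with Corollary~\ref{cor:exact-trace-criterion}. Your extra check that both square classes of $A=4\gamma N$ occur, so that the two values really are the attained maximum and minimum, is a detail the paper leaves implicit.
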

	
	\begin{proof}
		This is exactly Theorem~\ref{thm:primitive-root-two-values} together with
		Corollary~\ref{cor:exact-trace-criterion}.
	\end{proof}
	
	\begin{remark}
		In particular, Proposition~\ref{prop:trace-verification} reduces the uniform-in-$N$ primitive-root two-target verification to computing one Frobenius trace for each primitive root modulo $\ell$.  A direct enumeration
		of the relevant finite-field values gives a bound of
		$O(\phi(\ell-1)\cdot\ell)$ field operations.  Since elliptic-curve traces over $\Fp$ can be computed in time polynomial in $\log \ell$, the same verification can instead be carried out in
		\[
		O\bigl(\phi(\ell-1)\log(\ell)^k\bigr)
		\]
		bit operations for some absolute constant $k$, up to the standard modular arithmetic needed to construct the curves
		\[
		\mathcal E_\gamma:\quad Y^2=(X^2-1)(X^2-\gamma).
		\]
		The precise value of $k$ depends on the chosen point-counting algorithm.  For the $r$-power sieve, one verifies $x\in(\Fps)^r$ via $x^{(\ell-1)/r}\equiv1\pmod\ell$ in $O(\log r\cdot\log\ell)$ operations.
	\end{remark}
	
		
	
	
	\section{Paths forward for the sieve program}
	\label{sec:paths-forward}
	The obstruction being arithmetic, several strategies recover a usable bound.
	One may choose $\gamma$ adaptively: among the $\phi(\ell-1)$ primitive roots
	modulo $\ell$, search for a value whose trace $t_\gamma$ lies in the interval
	of Corollary~\textup{\ref{cor:exact-trace-criterion}}.  This is stronger than testing
	individual $N$'s, because one trace condition verifies the proposed pointwise
	bound for all $N\in\Fps$.  At the CRT level, even if
	$|U_{\gamma,\gamma^2}|$ exceeds $3\ell/4+1$ at an individual prime
	$\ell_i$, the relevant quantity is the product of the selected local factors.
	Corollary~\textup{\ref{thm:CRT}} gives the CRT product bound obtained from the
	uniform estimate, and Proposition~\textup{\ref{prop:verified-local}} explains
	how to replace those uniform factors by the exactly verified quantities
	$|U_{\gamma_i,\gamma_i^2}(N;\ell_i)|/\ell_i$ at the chosen primes.
	Finally, one can use more target forms: the $k$-target union
	has size $\ell(1-2^{-k})+O_k(\sqrt\ell)$ by
	Theorem~\ref{thm:k-target-explicit}.  This covers more local coefficient
	choices at the cost of a larger local candidate set; for each fixed $k$ the
	error term is negligible once $\ell$ is sufficiently large.
	
	\section{Conclusion}
	\label{sec:conclusion}
	
	The hyperbolic sieve for two linear forms is governed locally by the
	degree-four Legendre-symbol sum $K(A,B)$.  Identifying this sum with the
	Frobenius trace of the genus-one curve $Y^2=(X^2-A)(X^2-B)$ gives a precise
	explanation of the observed behavior: Hasse--Weil controls the union size to
	within $O(\sqrt\ell)$ of $3\ell/4$, while counterexamples to the proposed
	pointwise bound arise from sufficiently negative traces.
	
	The primitive-root two-target problem is more rigid than this first geometric
	interpretation suggests.  Once the ratio $c=\gamma$ is fixed, the family
	$
	Y^2=(X^2-A)(X^2-\gamma A)
	$
	does not produce a large family of unrelated elliptic curves as $A$ varies.
	It produces one elliptic curve and its quadratic twist.  Consequently,
	\[
	K(A,\gamma A)=-1-\chi(A)t_\gamma,
	\]
	and the local sieve size takes only two possible values as $N$ ranges over
	$\Fps$.  The original pointwise bound is therefore equivalent to the short
	trace interval
	\[
	-3+2\chi(\gamma-1)
	\le t_\gamma\le
	3+2\chi(1-\gamma)
	\]
	for the single curve $Y^2=(X^2-1)(X^2-\gamma)$.  This is the main conceptual
	reduction of the paper: the experimental two-target problem becomes an
	explicit elliptic-trace selection problem among primitive roots.
	
	For $k$ targets we obtained the unconditional estimate
	\[
	\left||V_k|-\ell(1-2^{-k})\right|
	\le (k-1+2^{-k})\sqrt\ell+k
	\]
	(Theorem~\ref{thm:k-target-explicit}), and the corresponding CRT product has
	size
	\[
	m\left(1-2^{-k}+O_k(L^{-1/2})\right)^n
	\]
	when all local primes are at least $L$.  The CRT statement is not merely a
	cardinality bound: it simultaneously covers all $k^n$ CRT-compatible local
	coefficient choices.  For special-shape inputs $N=u^rv$, the $r$-power sieve
	gives an exact local reduction by the factor $r$, and the restricted linear
	image $H_{a,r}$ has the explicit size described in
	Proposition~\ref{prop:exact-Har-size}.
	
	The remaining mathematical questions are now sharply formulated. First, one
	should study the distribution of the traces $t_\gamma$ as $\gamma$ ranges over
	primitive roots modulo $\ell$; this is the natural route to proving that good
	primitive-root pairs exist frequently.  Second, one should determine whether
	the trace-based verification can be combined across CRT primes to obtain
	provable product savings stronger than the worst-case Hasse--Weil bound.
	Third, the $k$-target and $r$-power constructions should be integrated into
	the baby-step--giant-step approach of~\cite{Hittmeir2018,HarveyHittmeir2022}
	to test whether these local savings translate into a genuine deterministic
	factorization improvement.
	
	\vspace{0.6cm}
	\noindent{\bf Acknowledgments.}
	This paper was being written while the first-named author (PS) visited NORCE and University of Bergen in Spring of 2026. He thanks the institutions for the excellent working conditions.

	\appendix
	\section{Python verification of the union-size counterexamples}
	\label{app:verification}
	
	The following short script verifies the two numerical counterexamples from Section~\ref{sec:results}.  It is included only as a reproducibility aid; none of the proofs above depends on computational verification.
	\begin{verbatim}
		def H(a, N, l):
		return {(a*x + N*pow(x, -1, l)) % l for x in range(1, l)}
		
		# Proposition 4.5: l=5, N=1, gamma=2
		print(H(2,1,5) | H(4,1,5))        # {0,1,2,3,4} -- full F_5
		print(3*5/4 + 1)                   # 4.75 < 5: bound fails
		
		# Remark 4.6: l=13, N=1, gamma=7
		print(len(H(7,1,13) | H(10,1,13))) # 11 > 3*13/4+1 = 10.75
	\end{verbatim}
	
	\section{SageMath script for Frobenius trace verification}
	\label{app:sage}
	
	The following SageMath script computes the Frobenius trace $t_\gamma$ of the
	curve $\mathcal{E}_\gamma\colon Y^2=(X^2-1)(X^2-\gamma)$ over $\mathbb{F}_\ell$
	for a given primitive root $\gamma$, evaluates the admissible trace interval
	of Corollary~\ref{cor:exact-trace-criterion}, and verifies the two-valued
	formula of Theorem~\ref{thm:primitive-root-two-values} for every
	$N\in\mathbb{F}_\ell^*$.  Running it on the two counterexamples recovers the
	values recorded in Remark~\ref{rem:trace-verify}.
	\begin{verbatim}
		def frobenius_trace(ell, gamma):
		"""Frobenius trace of Y^2 = (X^2-1)*(X^2-gamma) over GF(ell)."""
		F = GF(ell)
		count = sum(1 + kronecker_symbol((x^2 - 1)*(x^2 - gamma), ell)
		for x in range(ell))
		return ell + 1 - (count + 2)   # +2 for the two points at infinity
		
		def trace_interval(ell, gamma):
		"""Admissible trace interval from Corollary 4.4."""
		lo = -3 + 2*kronecker_symbol(gamma - 1, ell)
		hi =  3 + 2*kronecker_symbol(1 - gamma, ell)
		return lo, hi
		
		def verify_two_valued(ell, gamma, t):
		"""Check two-valued formula (Theorem 4.3) for all N in F_ell^*."""
		g2 = (gamma^2) % ell
		chi1g = kronecker_symbol(1 - gamma, ell)
		chig1 = kronecker_symbol(gamma - 1, ell)
		for N in range(1, ell):
		A = (4 * gamma * N) % ell
		chi_A = kronecker_symbol(A, ell)
		if chi_A == 1:
		expected = (3*ell + 1 + t - 2*chi1g) // 4
		else:
		expected = (3*ell + 1 - t + 2*chig1) // 4
		Ha = {(gamma*x + N*inverse_mod(x, ell)) % ell for x in range(1, ell)}
		Hb = {(g2*x   + N*inverse_mod(x, ell)) % ell for x in range(1, ell)}
		actual = len(Ha | Hb)
		if expected != actual:
		return False, N, expected, actual
		return True, None, None, None
		
		# ---- Run on both counterexamples ----
		for (ell, gamma) in [(5, 2), (13, 7)]:
		t   = frobenius_trace(ell, gamma)
		lo, hi = trace_interval(ell, gamma)
		ok, bad_N, exp, act = verify_two_valued(ell, gamma, t)
		print(f"ell={ell}, gamma={gamma}: t_gamma={t}, "
		f"interval=[{lo},{hi}], in interval={lo<=t<=hi}")
		print(f"  Two-valued formula verified for all N: {ok}")
	\end{verbatim}
	
	\noindent
	Expected output:
	\begin{verbatim}
		ell=5,  gamma=2: t_gamma=-2, interval=[-1,5], in interval=False
		Two-valued formula verified for all N: True
		ell=13, gamma=7: t_gamma=-6, interval=[-5,1], in interval=False
		Two-valued formula verified for all N: True
	\end{verbatim}
	
\end{document}